\documentclass[12pt,letterpaper,reqno]{amsart}

\usepackage[T1]{fontenc}
\usepackage[utf8]{inputenc}

\usepackage{newtxtext}
\usepackage{newtxmath}

\usepackage[scaled=0.90]{helvet}

\usepackage{courier}

\usepackage{mathrsfs}

\usepackage[table]{xcolor}

\usepackage{microtype}                   
\usepackage[
    a4paper,
    top=2.5cm,
    bottom=2.5cm,
    left=2.5cm,
    right=2.5cm,
    headheight=14pt,
    headsep=1.2em,
    footskip=2.5em,
]{geometry}

\usepackage{fancyhdr}
\fancypagestyle{plain}{%
    \fancyhf{}
    
    \fancyfoot[C]{\small\thepage}
}

\usepackage{amsmath, amsthm}
\usepackage{mathtools}
\usepackage{bm}
\usepackage{nicefrac}
\usepackage{stmaryrd}

\theoremstyle{plain}
\newtheorem{theorem}{Theorem}
\newtheorem{lemma}[theorem]{Lemma}
\newtheorem{proposition}[theorem]{Proposition}
\newtheorem{corollary}[theorem]{Corollary}
\newtheorem{conjecture}{Conjecture}

\theoremstyle{definition}
\newtheorem{definition}[theorem]{Definition}

\newtheorem{problem}{Problem}

\theoremstyle{remark}

\newtheorem*{theorem*}{Theorem}
\newtheorem*{lemma*}{Lemma}
\newtheorem*{proposition*}{Proposition}
\newtheorem*{corollary*}{Corollary}
\newtheorem*{conjecture*}{Conjecture}
\newtheorem*{remark*}{Remark}

\AtBeginDocument{}

\usepackage{graphicx}
\usepackage{float}
\usepackage{booktabs}
\usepackage{diagbox}
\usepackage{caption}
\usepackage{tikz}
\usepackage{tikz-cd}       
\usetikzlibrary{arrows.meta, calc, positioning, decorations.pathmorphing,
                 shapes.geometric, patterns, shadows}

\usepackage[
    colorlinks  = true,
    linkcolor   = cyan!80!black,
    citecolor   = purple,
    urlcolor    = blue!60!black,
    backref     = page,
]{hyperref}
\usepackage[nameinlink, capitalize, noabbrev]{cleveref}

\makeatletter
\renewcommand{\@cite}[2]{%
    {\scriptsize\raise0.5pt\hbox{[{#1\if@tempswa , #2\fi}]}}%
}
\makeatother

\definecolor{backrefcolor}{HTML}{8B4513}
\definecolor{citpagecolor}{HTML}{D2691E}

\makeatletter
\renewcommand*{\backref}[1]{}

\renewcommand*{\backrefalt}[4]{%
    \ifcase #1 %
    \or {\hspace{0.8em}\footnotesize\color{black}(cit.\ on p.~#2).}%
    \else {\hspace{0.8em}\footnotesize\color{black}(cit.\ on pp.~#2).}%
    \fi
}

\makeatother

\definecolor{emailcolor}{HTML}{E6007E}
\makeatletter
\let\orig@email\email
\renewcommand{\email}[1]{\orig@email{\color{emailcolor}#1}}
\makeatother

\makeatletter
\renewcommand{\section}{\@startsection{section}{1}%
    {\z@}{-2.5ex \@plus -1ex \@minus -.2ex}{1.5ex \@plus .2ex}%
    {\normalfont\fontsize{16}{19}\bfseries}}
\renewcommand{\subsection}{\@startsection{subsection}{2}%
    {\z@}{-2ex \@plus -0.8ex \@minus -.2ex}{1ex \@plus .2ex}%
    {\normalfont\fontsize{13}{16}\bfseries}}
\renewcommand{\subsubsection}{\@startsection{subsubsection}{3}%
    {\z@}{-1.5ex \@plus -0.5ex \@minus -.1ex}{0.8ex \@plus .1ex}%
    {\normalfont\fontsize{11}{14}\bfseries\itshape}}

\renewcommand{\@settitle}{\begin{center}%
    \baselineskip20pt\relax
    \bfseries\fontsize{18}{22}\selectfont
    \@title
    \end{center}%
}
\let\uppercasenonmath\@gobble
\AtBeginDocument{}

\renewcommand{\@setauthors}{%
    \begingroup
    \trivlist
    \centering
    \fontsize{13}{16}\selectfont
    \@topsep40\p@\relax
    \advance\@topsep by -\baselineskip
    \item\relax
    \begin{tabular}[t]{@{}l@{\hspace{2em}}l@{}}
    Narges Ghareghani & Mehdi Golafshan\textsuperscript{$\ast$}\\[2pt]
    Morteza Mohammad-Noori\textsuperscript{$\dagger$} & Pouyeh Sharifani
    \end{tabular}
    \vspace{10pt}
    \endtrivlist
    \endgroup
}

\def\@setthanks{\def\thanks##1{\par##1}\thankses}

\makeatother

\definecolor{toccolor}{RGB}{27,58,92}

\makeatletter
\renewcommand{\tableofcontents}{%
    \par
    \begin{list}{}{%
        \leftmargin2.5pc \rightmargin2.5pc
        \listparindent\z@ \itemindent\z@
        \parsep\z@ \topsep6pt \partopsep\z@}
    \item\relax
    {\large\bfseries Contents}\par
    \vspace{4pt}%
    \@input{\jobname.toc}%
    \if@filesw
        \expandafter\newwrite\csname tf@toc\endcsname
        \immediate\openout \csname tf@toc\endcsname \jobname.toc\relax
    \fi
    \global\@nobreakfalse
    \end{list}%
}
\makeatother

\makeatletter
\renewcommand{\tocsection}[3]{%
    \indentlabel{\@ifnotempty{#2}{\color{toccolor}\ignorespaces#1 #2.\quad}}{\bfseries\color{toccolor}#3}}
\renewcommand{\tocsubsection}[3]{%
    \indentlabel{\@ifnotempty{#2}{\color{toccolor}\ignorespaces#1 #2.\quad}}{\color{toccolor}#3}}
\renewcommand{\tocappendix}[3]{%
    \indentlabel{\@ifnotempty{#2}{\color{toccolor}\ignorespaces#1 #2.\quad}}{\bfseries\color{toccolor}#3}}

\def\l@section{\@tocline{1}{2pt}{0pt}{}{}}
\def\l@subsection{\@tocline{2}{0pt}{1.5em}{}{}}

\def\@tocline#1#2#3#4#5#6#7{\relax
    \ifnum #1>\c@tocdepth
    \else
        \par \addpenalty\@secpenalty\addvspace{#2}%
        \begingroup \hyphenpenalty\@M \small
        \@ifempty{#4}{%
            \@tempdima\csname r@tocindent\number#1\endcsname\relax
        }{%
            \@tempdima#4\relax
        }%
        \parindent\z@ \leftskip#3\relax \advance\leftskip\@tempdima\relax
        \rightskip\@pnumwidth plus4em \parfillskip-\@pnumwidth
        #5\leavevmode\hskip-\@tempdima
        {\color{toccolor}#6}\nobreak\relax
        \leaders\hbox{$\color{toccolor}\m@th
            \mkern 4.5mu\hbox{\normalfont\small\color{toccolor}.}%
            \mkern 4.5mu$}\hfill
        \hbox to\@pnumwidth{\@tocpagenum{\color{toccolor}#7}}\par
        \nobreak
        \endgroup
    \fi}
\makeatother

\usepackage{enumitem}
\setlist{leftmargin=*, itemsep=3pt, parsep=1pt}
\setlist[enumerate,1]{label=(\roman*)}
\setlist[enumerate,2]{label=(\alph*)}

\newcommand{\N}{\mathbb{N}}
\newcommand{\Z}{\mathbb{Z}}

\DeclareFontFamily{U}{eus}{}
\DeclareFontShape{U}{eus}{m}{n}{<->eusm10}{}
\newcommand{\A}{\text{\usefont{U}{eus}{m}{n}A}}

\DeclarePairedDelimiter{\abs}{\lvert}{\rvert}

\DeclarePairedDelimiter{\floor}{\lfloor}{\rfloor}

\DeclarePairedDelimiter{\set}{\{}{\}}

\newcommand{\defeq}{\coloneqq}

\definecolor{tabrefcolor}{named}{blue}
\definecolor{secrefcolor}{named}{red}
\newcommand{\tabref}[1]{{\small\hyperref[#1]{\textcolor{tabrefcolor}{Table~\ref*{#1}}}}}
\newcommand{\secref}[1]{{\small\hyperref[#1]{\textcolor{secrefcolor}{Section~\ref*{#1}}}}}
\newcommand{\subsecref}[1]{{\small\hyperref[#1]{\textcolor{secrefcolor}{Subsection~\ref*{#1}}}}}
\newcommand{\thmref}[2]{{\small\hyperref[#1]{#2~\ref*{#1}}}}

\definecolor{abstractbg}{gray}{0.93}

\makeatletter
\renewenvironment{abstract}{%
    \ifx\maketitle\relax
        \ClassWarning{\@classname}{Abstract should precede
            \protect\maketitle}%
    \fi
    \global\setbox\abstractbox=\vtop\bgroup
        \normalfont\small
        \list{}{\labelwidth\z@
            \leftmargin5pc \rightmargin\leftmargin
            \listparindent\normalparindent \itemindent\z@
            \parsep\z@ \@plus\p@
            
        }%
        \item\relax
        \begin{center}\textbf{Abstract}\end{center}%
        \vspace{-2pt}%
        \noindent\ignorespaces
}{%
    \endlist\egroup
    \ifx\@setabstract\relax \@setabstracta \fi
}

\let\orig@setabstracta\@setabstracta
\renewcommand{\@setabstracta}{%
    \orig@setabstracta
}
\makeatother

\title{Enumeration of Factor Occurrences in $k$-Bonacci Words over an Infinite Alphabet}

\author{Narges Ghareghani}
\address{School of Mathematics, Statistics, and Computer Science,
         University of Tehran, Tehran, Iran}
\email{ghareghani@ut.ac.ir}

\author{Mehdi Golafshan}
\thanks{${}^{\ast}$Supported by the FNRS Research
grant T.196.23 (PDR)}
\address{Department of Mathematics, University of Li\`ege,
         Li\`ege, Belgium}
\email{mgolafshan@uliege.be}

\author{Morteza Mohammad-Noori}
\thanks{${}^{\dagger}$Corresponding author}
\address{School of Mathematics, Statistics, and Computer Science,
         University of Tehran, Tehran, Iran}
\email{mmnoori@ut.ac.ir}

\author{Pouyeh Sharifani}
\address{School of Mathematics, Statistics, and Computer Science,
         University of Tehran, Tehran, Iran}
\email{pouyeh.sharifani@gmail.com}

\date{}

\begin{document}

\begin{abstract}
We study the $k$-Bonacci word over the infinite alphabet~$\N$.
Since the alphabet is infinite, the usual factor complexity is
infinite and does not provide any information. We therefore
investigate factor occurrence statistics in the finite iterates.
For $k \ge 3$, we obtain closed forms for the generating
functions (with respect to the iteration index) that count the
number of occurrences of an arbitrary digit in the $n$th iterate.
We then characterize the complete set of length-$2$ factors
occurring in the infinite word and compute, for each such factor,
a closed form for the generating function encoding its number of
occurrences across all finite iterates. As a consequence, the
associated counting sequences satisfy uniform $(k\!-\!1)$-step
Fibonacci-type recurrences and admit a description in terms of
$(k\!-\!1)$-Bonacci enumeration phenomena, including
self-convolution structures.
\end{abstract}

\maketitle

\vspace{-4pt}
\begin{list}{}{%
    \leftmargin3.5pc \rightmargin3.5pc
    \listparindent0pt \itemindent0pt
    \parsep4pt \topsep0pt \partopsep0pt}
\item\relax
{\small\textbf{Keywords:}\ \ combinatorics on words; morphic words;
$k$-Bonacci words; infinite alphabet; factor occurrences; ordinary
generating functions; $k$-Bonacci recurrences.}

\item\relax
{\small\textbf{2020 MSC:}\ \ 68R15; 05A15, 11B39, 37B10.}
\end{list}
\vspace{4pt}

\tableofcontents

\newpage
\section{Introduction}\label{Int}

We write $\N = \set{0, 1, 2, \dots}$.
Let $\A = \set{0, 1}$ and let
$\varphi \colon \A^{\ast} \to \A^{\ast}$ be the
morphism defined by
\[
  \varphi(0) = 0\,1, \qquad \varphi(1) = 0.
\]
The (classical) Fibonacci word
$\mathbf{F}$
({\small\color{teal}\underline{\href{https://oeis.org/A003849}{\textcolor{teal}{A003849}}}}) is the unique right-infinite
fixed point of~$\varphi$; equivalently,
\[
  \mathbf{F} = \varphi^{\omega}(0)
             = \lim_{n \to \infty} \varphi^{n}(0)
             = 0100101001001010010\cdots.
\]

The \emph{factor complexity} of an infinite word~$\mathbf{u}$ is
the function $\mathrm{p}_{\mathbf{u}}(n)$ counting the number of
distinct factors of length~$n$. The Fibonacci word~$\mathbf{F}$ is
the prototypical Sturmian word, satisfying
$\mathrm{p}_{\mathbf{F}}(n) = n + 1$ for all
$n \in \N$~\cite{A} --- the minimum complexity for any aperiodic
word.

Classical monographs on combinatorics on words and symbolic
dynamics~\cite{B,C}, together with recent surveys at the interface
of combinatorics, automata, and number theory~\cite{D}, provide
comprehensive accounts of the Fibonacci word~$\mathbf{F}$,
including its realization as the coding of an irrational rotation
and as the symbolic coding of suitable billiard trajectories in the
unit square.

For general $k \ge 2$, Rauzy~\cite{E} (case $k = 3$; see
also~\cite{F}) and Sirvent~\cite{G} introduced the
\emph{$k$-Bonacci morphism}
$\varphi_k \colon \A_k^{\ast} \to \A_k^{\ast}$ by
$\varphi_k(i) = 0\,(i{+}1)$ for $0 \le i \le k-2$ and
$\varphi_k(k-1) = 0$. Setting
$\mathbf{F}_n^{(k)} \defeq \varphi_k^{n}(0)$, one obtains the
$k$-fold concatenation rule
\[
  \mathbf{F}_n^{(k)}
  = \mathbf{F}_{n-1}^{(k)}\,\mathbf{F}_{n-2}^{(k)}
    \cdots \mathbf{F}_{n-k}^{(k)},
  \qquad \forall\, n \ge k,
\]
so the lengths
$\abs{\mathbf{F}_n^{(k)}}$ satisfy the classical $k$-Bonacci
recurrence. The \emph{infinite $k$-Bonacci word} is
\[
  \mathbf{F}^{(k)} = \varphi_k^{\omega}(0)
                    = \lim_{n \to \infty} \varphi_k^{n}(0).
\]

The $k$-Bonacci word $\mathbf{F}^{(k)}$ is a strict episturmian
(equivalently, Arnoux--Rauzy) word over~$k$
letters~\cite{H,I}: it is aperiodic, its factor set is closed
under reversal, and
$\mathrm{p}_{\mathbf{F}^{(k)}}(n) = (k-1)n + 1$ for all
$n \in \N$. For $k = 2$ this recovers the Sturmian case. For
further background on episturmian and Arnoux--Rauzy words we
refer the reader to the survey~\cite{J}.

More recently, several authors have considered $k$-Bonacci words
over the infinite alphabet~$\N$, obtained by a ``lift'' of the
finite-alphabet substitution in which new symbols are created at
each morphic iteration. Since the alphabet is~$\N$, each letter
is simultaneously a natural number; to emphasize their role as
symbols we refer to them as \emph{digits} (see
\subsecref{Pre}). Expressions such as~$2i$ or~$ki+j$ thus denote
individual digits, while~$i$, $j$, and~$k$ remain integer
parameters.

For $k = 2$, Zhang \emph{et al.}~\cite{L}
introduced \emph{the Fibonacci word over an infinite alphabet},
denoted~$\mathbf{W}^{(2)}$
({\small\color{teal}\underline{\href{https://oeis.org/A104324}{\textcolor{teal}{A104324}}}}), as the right-infinite fixed point of
the morphism
$\phi_2 \colon \N^{\ast} \to \N^{\ast}$ defined by
\[
  \phi_2(2i) = (2i)(2i+1),
  \qquad
  \phi_2(2i+1) = 2i+2,
  \qquad \forall\, i \in \N,
\]
where juxtaposition denotes concatenation. For example, starting
from~$0$, one obtains
\[
  \mathbf{W}^{(2)} = \phi_2^{\omega}(0)
  = 0\,1\,2\,2\,3\,2\,3\,4\,2\,3\,4\,4\,5\,2\,3\,4\,4\,5\,4\,5\,6
    \cdots,
\]
so that each even letter~$2i$ is preserved and produces a new odd
letter~$2i+1$, while each odd letter~$2i+1$ is replaced by the new
even letter~$2i+2$.

More generally, for any integer $k \ge 3$, Ghareghani
\emph{et al.}~\cite{M} defined \emph{the infinite alphabet
$k$-Bonacci word}~$\mathbf{W}^{(k)}$ as the fixed point
$\mathbf{W}^{(k)} = \phi_k^{\omega}(0)$ of the morphism
$\phi_k \colon \N^{\ast} \to \N^{\ast}$ given, for $i \in \N$ and
$0 \le j \le k-1$, by
\[
  \phi_k(ki+j) =
  \begin{cases}
    (ki)(ki+j+1), & \text{if } 0 \le j \le k-2;\\[2pt]
    ki+k,         & \text{if } j = k-1.
  \end{cases}
\]
By construction, every integer occurs as a letter
in~$\mathbf{W}^{(k)}$.

The reduction map $\pi_k \colon \N \to \A_k$,
$n \mapsto n \bmod k$, extends letterwise to words and satisfies
$\pi_k(\mathbf{W}^{(k)}) = \mathbf{F}^{(k)}$. In particular,
$\pi_2(\mathbf{W}^{(2)}) = \mathbf{F}$. For example,
$\mathbf{W}^{(3)}$ begins with $0102013\cdots$, which projects
under~$\pi_3$ to $0102010\cdots$.

Equivalently, writing
$\mathbf{W}^{(k)} = (w_n)_{n \in \N}$, one can view it as a
\emph{data word} $(\ell_n, d_n)_{n \in \N}$ where the finite label
is $\ell_n = \pi_k(w_n) \in \A_k$ and the data value is
$d_n = w_n \in \N$; in this setting, one typically compares data
values using predicates such as equality (and, in ordered variants,
the natural order), as in the standard data-word frameworks
of~\cite{O,N}.

Many combinatorial properties of~$\mathbf{W}^{(k)}$ project
under~$\pi_k$ to those of~$\mathbf{F}^{(k)}$~\cite{M,P,Q}.
However, the ordinary factor complexity of~$\mathbf{W}^{(k)}$ is
already infinite ---
$\mathrm{p}_{\mathbf{W}^{(k)}}(n) = \infty$ for all
$n \in \N_{>0}$ --- since $\mathbf{W}^{(k)}$ contains infinitely
many distinct letters. For instance, the factors $22$, $44$,
$66$, \dots\ in~$\mathbf{W}^{(2)}$ all project to the single
factor~$00$ under~$\pi_2$~\cite{U}.

Classical recurrence properties may also fail over an infinite
alphabet. For instance, $\mathbf{W}^{(2)}$ is not uniformly
recurrent in the usual sense: the letter~$1$ occurs only once (as
the second symbol), so the one-letter factor~$1$ appears only
finitely many times, even though its residue class modulo~$2$
continues to recur through larger odd letters.

When the alphabet is countably infinite, counting words by
cardinality is not informative: for every fixed length $m \ge 1$
there are already countably many words of length~$m$. Accordingly,
for infinite-alphabet morphic words such as~$\mathbf{W}^{(k)}$,
the combinatorial emphasis shifts from enumerating distinct factors
to analyzing \emph{pattern statistics}, such as
factor-occurrence counts across the finite iterates. In particular,
since $\mathbf{W}^{(k)}$ contains infinitely many distinct letters,
its ordinary factor complexity is infinite, so occurrence statistics
provide a more sensitive measure of structure.

\subsection{Our contributions}\label{subsec:contrib}

Berstel's survey on Fibonacci words~\cite{B} distinguishes two
broad research directions: (i)~the study of factors and
combinatorial structure (special factors, repetitions, complexity,
\emph{etc.}), and (ii)~numeration and arithmetic aspects
(normalization, addition/subtraction, automata/transducers). In the
infinite-alphabet setting, where standard factor complexity is
infinite, the first direction is naturally approached through
\emph{factor-occurrence statistics}. In this paper we follow this
viewpoint for the infinite word~$\mathbf{W}^{(k)}$. Our main
contributions are as follows.
\begin{itemize}
  \item \textbf{Digit-occurrence enumeration.}
    For each $m \in \N$ and each digit $d \in \N$, let
    $\abs{W_m^{(k)}}_d$ denote the number of occurrences
    of~$d$ in the finite word~$W_m^{(k)}$. We derive a unified
    recurrence for these quantities and solve it systematically
    using generating functions. This yields explicit closed
    forms showing that the dependence on~$d$ is governed by its
    quotient and remainder upon division by~$k$ (see
    \secref{section:4}, in particular
    {\small\hyperref[thm:Cdigit]{Theorem~\ref*{thm:Cdigit}}}).

  \item \textbf{Characterization of length-$2$ factors.}
    We determine the complete set of length-$2$ factors occurring
    in~$\mathbf{W}^{(k)}$ and organize them into three natural
    families corresponding to factors occurring inside blocks and
    those created at block boundaries in the recursive
    decomposition of the iterates (see \subsecref{subsection:5.1}
    and {\small\hyperref[thm:Fac2-characterization]{Theorem~\ref*{thm:Fac2-characterization}}}).

  \item \textbf{Enumeration of length-$2$ factor occurrences.}
    For each length-$2$ factor occurring
    in~$\mathbf{W}^{(k)}$, we compute a closed form for the
    generating function for its occurrence counts
    in the finite iterates~$W_m^{(k)}$, $m \in \N$. As a
    consequence, the associated counting sequences satisfy uniform
    $(k\!-\!1)$-step Fibonacci-type recurrences and admit a
    description in terms of $(k\!-\!1)$-Bonacci phenomena,
    including self-convolutions (i.e., Cauchy products of a
    counting sequence with itself); see \subsecref{subsection:5.2}.
\end{itemize}

\smallskip
\noindent\textbf{Methods.}\enspace
The proofs rely on the recursive block decomposition of the
iterates developed in \secref{sec:prelim}. We separate occurrences
fully contained in a single block from those interacting with block
boundaries, which leads to tractable linear recurrences for
occurrence counts. These recurrences are then solved uniformly by
passing to generating functions. A key technical device is
the shift operator (addition of~$k$ to every letter), which
transports factors between iterates and reduces the enumeration of
larger digits and factors to smaller ones.

\subsection{Related work}\label{subsec:related}

Although $\mathbf{W}^{(k)}$ is defined over an infinite alphabet,
many classical factor-theoretic properties remain accessible,
often by exploiting its morphic self-similarity. We briefly
highlight several results on the description and enumeration of
occurrences of certain families of factors.
\begin{itemize}
  \item \textbf{Square factors.}
    Glen \emph{et al.}~\cite{R} proved that
    $\mathbf{W}^{(2)}$ is cube-free with critical exponent~$2$,
    and that the total number of square occurrences (counted with
    multiplicity) in the $n$th iterate satisfies
    $T(W_n^{(2)}) = f_n - 1$, where $(f_n)_{n \in \N}$ is the
    Fibonacci sequence. Ghareghani and Sharifani~\cite{P}
    extended this to general~$k$, showing that
    $\mathbf{W}^{(k)}$ is cube-free for every $k > 2$ with
    critical exponent
    $E(\mathbf{W}^{(k)}) = 3 - 3/(2^{k}-1)$, and determined
    all factors achieving this exponent.

  \item \textbf{Palindromic factors.}
    Zhang \emph{et al.}~\cite{L} showed that $\mathbf{W}^{(2)}$
    has no palindrome of length greater than~$3$; Ghareghani
    \emph{et al.}~\cite{M} extended this to all~$k$, proving that
    $\mathbf{W}^{(k)}$ admits palindromes only of finitely many
    lengths. This contrasts sharply with the episturmian
    word~$\mathbf{F}^{(k)}$, which has palindromes of unbounded
    length.

  \item \textbf{Kernel words and special factors.}
    Kernel words and gap sequences, studied for the finite-alphabet
    Tribonacci and $k$-Bonacci words in~\cite{T,S}, have been
    extended to the infinite alphabet by Zhang~\cite{Q}, who
    describes the kernel words of~$\mathbf{W}^{(3)}$ and shows
    that the associated spacing phenomena persist.

  \item \textbf{Lyndon factors.}
    Glen \emph{et al.}~\cite{R} showed that each iterate
    $W_n^{(2)}$ is itself a Lyndon word and gave explicit formulas
    for the number of Lyndon factors by initial digit; for
    instance, $L_0(W_n^{(2)}) = f_{n+2}$. Analogous results for
    general~$k$ remain open.
\end{itemize}

\subsection{Notations and conventions}\label{Pre}

We write $\N_{>0} = \set{1, 2, \dots}$. Throughout this paper, we
work over the alphabet~$\N$ and refer to its elements as
\emph{digits}. We denote by~$\N^{\ast}$ the set of finite words
over~$\N$, with~$\varepsilon$ the empty word, and
by~$\N^{\N}$ the set of right-infinite words. The length of
$W \in \N^{\ast}$ is~$\abs{W}$; concatenation is written by
juxtaposition. Finite words are denoted by capital
letters $U, V, W, \dots$, and right-infinite words by boldface
$\mathbf{U}, \mathbf{V}, \mathbf{W}, \dots$. When writing
explicit examples, digits may exceed~$9$; to avoid ambiguity,
we occasionally insert separators when convenient.

A finite word $U \in \N^{\ast}$ is a \emph{factor} of
$V \in \N^{\ast} \cup \N^{\N}$ if $V = XUY$ for some
$X \in \N^{\ast}$ and
$Y \in \N^{\ast} \cup \N^{\N}$; we write
$U \sqsubseteq V$. We write $U \le_{\mathrm{pre}} V$
(resp.\ $U \le_{\mathrm{suf}} V$) if~$U$ is a prefix (resp.\
suffix) of~$V$. The set of factors of~$V$ of length~$m$ is
$\mathrm{Fac}_m(V) = \set*{U \in \N^{\ast} : \abs{U} = m
  \text{ and } U \sqsubseteq V}$.

For a non-empty word
$B \in \N^{\ast} \setminus \set{\varepsilon}$, the
\emph{occurrence count}~$\abs{W}_B$ is the number of (possibly
overlapping) occurrences of~$B$ in~$W$:
\[
  \abs{W}_B
  \defeq \#\bigl\{\, X \in \N^{\ast} :
    W = XBY \text{ for some } Y
    \in \N^{\ast} \,\bigr\}.
\]

The \emph{shift operator} adds a constant to every digit. For
$n \in \N$ and
$W = w_0 w_1 \cdots w_{m-1} \in \N^{\ast}$, define
\[
  n \oplus W
  \defeq (w_0 + n)\,(w_1 + n) \cdots (w_{m-1} + n);
\]
the definition extends to right-infinite words and to finite sets
$S \subseteq \N^{\ast} \setminus \set{\varepsilon}$ by setting
$n \oplus S \defeq \set*{n \oplus U : U \in S}$.
For example, $2 \oplus 0102013 = 2324235$ and
$3 \oplus \set{01, 22, 0102} = \set{34, 55, 3435}$.

We adopt the standard Iverson bracket notation: for any
predicate~$P$, the symbol~$[P]$ equals~$1$ if~$P$ is true
and~$0$ otherwise. In particular,
$[\,n = m\,] = \delta_{n,m}$ is the Kronecker delta.

\section{Preliminaries}\label{sec:prelim}

For $k \ge 2$, set
$W_n^{(k)} \defeq \phi_k^{\,n}(0)$ for $n \in \N$ and
$\mathbf{W}^{(k)} \defeq \phi_k^{\,\omega}(0)$. Each
$W_n^{(k)}$ is a prefix of~$\mathbf{W}^{(k)}$ of
length~$f_{n+k}^{(k)}$, where
$\bigl(f_m^{(k)}\bigr)_{m \in \N}$ denotes the $k$-Bonacci (or
$k$-generalized Fibonacci) numbers~\cite[Eq.~(1)]{1} defined by
$f_0^{(k)} = \cdots = f_{k-2}^{(k)} = 0$,
$f_{k-1}^{(k)} = 1$, and
\[
  f_m^{(k)}
  = f_{m-1}^{(k)} + \cdots + f_{m-k}^{(k)},
  \qquad \forall\, m \ge k.
\]
For example, when $k = 3$,
\[
\begin{aligned}
  W_0^{(3)} &= 0,\\
  W_1^{(3)} &= 01,\\
  W_2^{(3)} &= 0102,\\
  W_3^{(3)} &= 0102013,\\
  W_4^{(3)} &= 0102013010234,\ \text{etc.}
\end{aligned}
\]
The following lemma, obtained by combining Lemmas~4 and~6
in~\cite{M}, provides a recursive decomposition of the finite
$k$-Bonacci words, mirroring the usual numerical $k$-Bonacci
recurrence.

\begin{lemma}[{\cite[Lemma~4 and~6]{M}}]\label{lem:W-decomp}
Let $k \ge 2$ and let $n \in \N_{>0}$. Then
\[
  W_n^{(k)} =
  \begin{cases}
    W_{n-1}^{(k)}\, W_{n-2}^{(k)} \cdots W_0^{(k)}\, n,
      & \text{if } n < k;\\[1ex]
    W_{n-1}^{(k)}\, W_{n-2}^{(k)} \cdots W_{n-k+1}^{(k)}\,
      \bigl(k \oplus W_{n-k}^{(k)}\bigr),
      & \text{if } n \ge k.
  \end{cases}
\]
\end{lemma}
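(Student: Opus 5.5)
We prove the decomposition by induction on $n$, using two elementary facts about $\phi_k$. First, $\phi_k$ commutes with the shift by multiples of $k$: since $ki+j+k = k(i+1)+j$, the definition gives $\phi_k(k\oplus a) = k\oplus\phi_k(a)$ for every digit $a$. Because $\phi_k$ is a morphism, this extends to $\phi_k(k\oplus U)=k\oplus\phi_k(U)$ for every finite word $U$. Second, $W_{n+1}^{(k)}=\phi_k(W_n^{(k)})$, so applying $\phi_k$ to a decomposition of $W_n^{(k)}$ should produce one for $W_{n+1}^{(k)}$.

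\textbf{Case $n<k$.} Here we show $W_n^{(k)}=W_{n-1}^{(k)}\cdots W_0^{(k)}\,n$ by induction. The base $n=1$ is $\phi_k(0)=01$. Assume the identity for some $n$ with $n+1\le k-1$, and apply $\phi_k$:
\[
W_{n+1}^{(k)}=W_n^{(k)}\cdots W_1^{(k)}\,\phi_k(n).
\]
Since $n\le k-2$, we have $\phi_k(n)=0\,(n+1)=W_0^{(k)}(n+1)$, which gives the claim.

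\textbf{Transition at $n=k$.} The first formula at $n=k-1$ ends with the digit $k-1$, and $\phi_k(k-1)=k=k\oplus W_0^{(k)}$. Applying $\phi_k$ therefore gives
\[
W_k^{(k)}=W_{k-1}^{(k)}\cdots W_1^{(k)}\,(k\oplus W_0^{(k)}),
\]
which is exactly the second formula at $n=k$.

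\textbf{Case $n\ge k$.} Assume the second formula holds for $n$. Applying $\phi_k$ and using shift-commutation gives
\[
W_{n+1}^{(k)}=W_n^{(k)}\cdots W_{n-k+2}^{(k)}\,\bigl(k\oplus W_{n-k+1}^{(k)}\bigr).
\]
This is the formula for $n+1$: the block list runs from $W_n$ down to $W_{(n+1)-k+1}$, followed by the shifted block $k\oplus W_{(n+1)-k}$. That completes the induction.

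The only delicate point is the bookkeeping at the transition from $n=k-1$ to $n=k$. There, the trailing digit $k-1$ becomes the shifted block $k\oplus W_0^{(k)}$ rather than splitting into a block followed by a new digit. Handling it as its own case keeps the index shift consistent.

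The argument holds verbatim for $k=2$. In that case the shift identity reads $\phi_2(a+2)=2\oplus\phi_2(a)$, which follows from the definition of $\phi_2$.
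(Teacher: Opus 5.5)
Your proof is correct and complete. The shift-commutation $\phi_k(k\oplus U)=k\oplus\phi_k(U)$ holds in both branches of the definition ($j\le k-2$ and $j=k-1$). The $n<k$ induction needs only $\phi_k(n)=0\,(n+1)$ for $n\le k-2$. The hand-off at $n=k$ correctly uses $\phi_k(k-1)=k=k\oplus W_0^{(k)}$. The $n\ge k$ step is just $\phi_k$ applied to the previous decomposition and commuted past the shift.

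The paper gives no argument of its own here; it imports the statement from Lemmas~4 and~6 of \cite{M}. Your induction is therefore a self-contained derivation of what the paper takes from the literature, and there is nothing in the paper to compare it against.
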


To illustrate the lemma for $k = 3$, consider the iterates listed
above. When $n = 1 < k$, the first branch yields
$W_1^{(3)} = W_0^{(3)}\, 1 = 01$. When $n = 4 \ge k$, the second
branch gives
\[
  W_4^{(3)}
  = W_3^{(3)}\, W_2^{(3)}\,
    \bigl(3 \oplus W_1^{(3)}\bigr)
  = 0102013 \cdot 0102 \cdot 34
  = 0102013010234.
\]

\begin{definition}[{\cite[Definition~10]{M}}]\label{def:types}
Let $k \ge 2$ and $n \in \N_{>0}$, and set
$s \defeq \max\set{n - k + 1,\, 0}$.
Each occurrence of a factor $U \sqsubseteq W_n^{(k)}$ is
classified according to its position relative to the block
decomposition of
{\small\hyperref[lem:W-decomp]{Lemma~\ref*{lem:W-decomp}}}.
A given factor may admit occurrences of different types.
We say that an occurrence of~$U$ is
\begin{enumerate}[label=\textup{(\Roman*)}]
  \item\label{item:included} \emph{included} if $U = n$, or if
    $U \sqsubseteq W_i^{(k)}$ for some $s \le i \le n-1$, or,
    when $n \ge k$, if
    $U \sqsubseteq k \oplus W_{n-k}^{(k)}$;

  \item\label{item:bordering} \emph{bordering of type~$j$} if there exists an
    integer~$j$ with $s + 1 \le j \le n - 1$ and a factorization
    $U = X_j Y_j$ with
    $X_j, Y_j \in \N^{\ast} \setminus \set{\varepsilon}$ such that
    \[
      X_j \le_{\mathrm{suf}} W_j^{(k)}
      \quad\text{and}\quad
      Y_j \le_{\mathrm{pre}}
        \bigl(W_{j-1}^{(k)}\, W_{j-2}^{(k)} \cdots
              W_s^{(k)}\bigr);
    \]

  \item\label{item:straddling} \emph{straddling} if it admits a factorization
    $U = PQ$ with
    $P, Q \in \N^{\ast} \setminus \set{\varepsilon}$ such that
    \[
      P \le_{\mathrm{suf}}
        \bigl(W_{n-1}^{(k)}\, W_{n-2}^{(k)} \cdots
              W_s^{(k)}\bigr)
      \quad\text{and}\quad
      \begin{cases}
        Q = n, & \text{if } n < k;\\
        Q \le_{\mathrm{pre}}
          \bigl(k \oplus W_{n-k}^{(k)}\bigr),
          & \text{if } n \ge k.
      \end{cases}
    \]
\end{enumerate}
\end{definition}

To illustrate the three types, take $k = 3$ and $n = 4$, giving
$s = 2$ and the decomposition
\[
  W_4^{(3)}
  = \underbrace{W_3^{(3)}}_{0102013}
    \underbrace{W_2^{(3)}}_{0102}
    \underbrace{3 \oplus W_1^{(3)}}_{34}\,.
\]
The length-$2$ factor~$02$ lies entirely inside~$W_2^{(3)} = 0102$
and thus has an included occurrence
in the sense of~\ref{item:included}.
The factor~$30$ lies across the junction
$W_3^{(3)} \mid W_2^{(3)}$: its decomposition $30 = 3 \cdot 0$
satisfies $3 \le_{\mathrm{suf}} W_3^{(3)}$ and
$0 \le_{\mathrm{pre}} W_2^{(3)}$, making it a bordering
occurrence of type~$3$ as in~\ref{item:bordering}.
Finally, the factor~$23$ crosses the last boundary: writing
$23 = 2 \cdot 3$ we have
$2 \le_{\mathrm{suf}} (W_3^{(3)}\, W_2^{(3)})$ and
$3 \le_{\mathrm{pre}} (3 \oplus W_1^{(3)}) = 34$, so it is a
straddling occurrence in the sense of~\ref{item:straddling}.

\begin{remark*}\label{rem}
\begin{enumerate}[label=\textup{(\roman*)}]
  \item\label{item:rem-bordering} If an occurrence of~$U$ is bordering of type~$j$ (in the sense
    of~\ref{item:bordering} above), say $U = X_j Y_j$ with
    $X_j, Y_j \neq \varepsilon$, then the length-$2$ factor
    $j\,0$ occurs in~$U$.

  \item When it is convenient to specify the decomposition
    $U = PQ$ in~\ref{item:straddling}, we refer to the occurrence
    as $(P, Q)$-straddling.
\end{enumerate}
\end{remark*}

The following lemma, which will be used repeatedly, shows that the
largest digit in each iterate is confined to a single position.

\begin{lemma}[{\cite[Lemma~9]{M}}]\label{lem:largest-letter}
Let $k \ge 2$ and $n \in \N_{>0}$. The largest digit
in~$W_n^{(k)}$ is~$n$, and it appears exactly once, as the final
letter. Consequently, every factor~$U \sqsubseteq W_n^{(k)}$
containing~$n$ satisfies
\[
  U \le_{\mathrm{suf}} W_n^{(k)}
  \quad\text{and}\quad
  n \le_{\mathrm{suf}} U.
\]
\end{lemma}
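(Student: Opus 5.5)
We argue by strong induction on $n \ge 1$, using the block decomposition of \thmref{lem:W-decomp}{Lemma}. We prove the following slightly strengthened statement: every digit of $W_n^{(k)}$ is at most $n$, and $n$ occurs exactly once, as the final letter. For $n = 0$ we have $W_0^{(k)} = 0$, so all digits are at most $0$; this serves as the base of the bound.

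\emph{Case $1 \le n < k$.} Here $W_n^{(k)} = W_{n-1}^{(k)} \cdots W_0^{(k)}\, n$. By the induction hypothesis (together with the case $n=0$), every digit of $W_i^{(k)}$ with $i \le n-1$ is at most $i \le n-1 < n$. Hence the only occurrence of $n$ is the final letter, and it is the largest digit.

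\emph{Case $n \ge k$.} Here $W_n^{(k)} = W_{n-1}^{(k)} \cdots W_{n-k+1}^{(k)}\,\bigl(k \oplus W_{n-k}^{(k)}\bigr)$. The blocks $W_i^{(k)}$ with $n-k+1 \le i \le n-1$ have all digits at most $n-1$ by induction. The last block $k \oplus W_{n-k}^{(k)}$ has all digits at most $(n-k)+k = n$. Moreover, if $n-k \ge 1$, the induction hypothesis says $n-k$ occurs in $W_{n-k}^{(k)}$ exactly once, as its last letter, so after the shift $n$ occurs exactly once, as the last letter of the block and hence of $W_n^{(k)}$. If $n = k$, the last block is $k \oplus W_0^{(k)} = k = n$, a single letter, and the conclusion again holds. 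This completes the induction.

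\emph{Consequence.} Let $U \sqsubseteq W_n^{(k)}$ contain $n$. Since $n$ occurs in $W_n^{(k)}$ only at its last position, any occurrence of $U$ must cover that position. So that occurrence of $U$ ends at the end of $W_n^{(k)}$, giving $U \le_{\mathrm{suf}} W_n^{(k)}$. Its last letter is then $n$, giving $n \le_{\mathrm{suf}} U$.

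The only delicate point is keeping track of the case $n = k$, where the shifted block comes from $W_0^{(k)}$ and falls outside the induction hypothesis. Strengthening the statement to include the bound for $n = 0$ handles this cleanly.
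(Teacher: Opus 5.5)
Your proof is correct. The paper does not prove this lemma itself; it imports it from \cite[Lemma~9]{M}, so there is no in-paper argument to match. Your strong induction on the block decomposition of Lemma~\ref{lem:W-decomp} is a valid self-contained derivation. It strengthens the hypothesis to ``all digits of $W_i^{(k)}$ are at most $i$'' (including $i=0$) and treats $n=k$ separately. Deducing the suffix consequence from the uniqueness of the position of $n$ is also exactly right.

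A shorter route avoids the decomposition and works directly with $W_n^{(k)} = \phi_k\bigl(W_{n-1}^{(k)}\bigr)$. For every digit $d$, all letters of $\phi_k(d)$ are at most $d+1$, and $d+1$ occurs only as the last letter of $\phi_k(d)$. Ordinary induction from $W_1^{(k)} = 01$ then shows that the digit $n$ can arise only from the unique final $n-1$ of $W_{n-1}^{(k)}$. This avoids strong induction and the $n=k$ edge case. Your version has the advantage of using only the block structure on which the rest of the paper relies.
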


We introduce two auxiliary sequences. For each $k \ge 2$, let
$\bigl(h_n^{(k)}\bigr)_{n \in \N}$ be defined by
$h_0^{(k)} = 1$,\;
$h_n^{(k)} = 2^{\,n-1}$ for $1 \le n \le k-1$, and
\[
  h_n^{(k)}
  = h_{n-1}^{(k)} + \cdots + h_{n-k}^{(k)},
  \qquad \forall\, n \ge k;
\]
and let $\bigl(g_n^{(k)}\bigr)_{n \in \N}$ be defined by
$g_0^{(k)} = 0$,\;
$g_n^{(k)} = 2^{\,n-1}$ for $1 \le n \le k-1$, and
\[
  g_n^{(k)}
  = g_{n-1}^{(k)} + \cdots + g_{n-k}^{(k)},
  \qquad \forall\, n \ge k.
\]
Let
\[
  H_k(y) \defeq \sum_{n \in \N} h_n^{(k)}\, y^n
  \qquad\text{and}\qquad
  G_k(y) \defeq \sum_{n \in \N} g_n^{(k)}\, y^n
\]
denote the generating functions of
$\bigl(h_n^{(k)}\bigr)_{n \in \N}$ and
$\bigl(g_n^{(k)}\bigr)_{n \in \N}$, respectively. A standard
generating function extraction yields
\begin{align}
  H_k(y) &= \frac{1}{1 - y - \cdots - y^k},
    \label{eq:Hk}\\
  G_k(y) &= (1 - y^k)\, H_k(y) - 1.
    \label{eq:GkHk}
\end{align}

\section{Factor Occurrence Counts and the Shift Identity}%
\label{section:3}

Fix $k \ge 2$ and a non-empty word
$B \in \N^{\ast} \setminus \set{\varepsilon}$. For $n \in \N$,
let $c^{(k)}(B;\, n) \defeq \abs{W_n^{(k)}}_B$,
and define the generating function
$C_B^{(k)}(y) \defeq \sum_{n \in \N} c^{(k)}(B;\, n)\, y^n$.
For example, when $k = 3$,
\[
  C_0^{(3)}(y)
  = 1 + y + 2y^2 + 3y^3 + 5y^4 + \cdots
  \qquad\text{and}\qquad
  C_{01}^{(3)}(y)
  = y + y^2 + 2y^3 + 3y^4 + \cdots.
\]

\begin{theorem}\label{thm:CBshift}
Let $k \ge 3$ and let
$B \in \N^{\ast} \setminus \set{\varepsilon}$ be a factor
of~$\mathbf{W}^{(k)}$.
\begin{enumerate}[label=\textup{(\alph*)}]
  \item\label{item:shift-factor}
    The shifted word $k \oplus B$ is again a factor
    of~$\mathbf{W}^{(k)}$.
  \item\label{item:shift-ogf}
    If, in addition, for every $n \ge k$ every occurrence of
    $k \oplus B$ in~$W_n^{(k)}$ is included (equivalently,
    $k \oplus B$ has no bordering or straddling occurrence
    in~$W_n^{(k)}$), then
    \begin{equation}\label{eq:CBshift}
      C_{k \oplus B}^{(k)}(y)
      = y^k\, H_{k-1}(y)\, C_B^{(k)}(y).
    \end{equation}
\end{enumerate}
\end{theorem}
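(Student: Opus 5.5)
For part~\ref{item:shift-factor} I will use the block decomposition of {\small\hyperref[lem:W-decomp]{Lemma~\ref*{lem:W-decomp}}}. Since $B$ is a factor of $\mathbf{W}^{(k)}$, it is a factor of some prefix $W_m^{(k)}$. The second branch of the lemma, applied with $n = m+k \ge k$, shows that $k \oplus W_m^{(k)}$ is a suffix, hence a factor, of $W_{m+k}^{(k)}$. The shift operator is applied letterwise, so $B \sqsubseteq W_m^{(k)}$ gives $k \oplus B \sqsubseteq k \oplus W_m^{(k)} \sqsubseteq W_{m+k}^{(k)}$. As $W_{m+k}^{(k)}$ is a prefix of $\mathbf{W}^{(k)}$, this proves~\ref{item:shift-factor}.

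For part~\ref{item:shift-ogf}, write $a_n \defeq c^{(k)}(k\oplus B;\,n)$ and $b_n \defeq c^{(k)}(B;\,n)$, and set $b_n \defeq 0$ for $n<0$.

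The first step handles small $n$. For $0 \le n \le k-1$ I claim $a_n = 0$. Every letter of $k \oplus B$ is at least $k$, whereas by {\small\hyperref[lem:largest-letter]{Lemma~\ref*{lem:largest-letter}}} (and trivially for $W_0^{(k)} = 0$) every letter of $W_n^{(k)}$ is at most $n < k$.

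The second step handles $n \ge k$. By hypothesis every occurrence of $k\oplus B$ in $W_n^{(k)}$ is included, so it lies entirely inside one of the blocks
\[
W_{n-1}^{(k)},\ \dots,\ W_{n-k+1}^{(k)},\ k\oplus W_{n-k}^{(k)}.
\]
The clause ``$U=n$'' in the definition of included occurrences adds nothing new here: for $n \ge k$ the letter $n$ is the last letter of the final block $k \oplus W_{n-k}^{(k)}$. These blocks occupy disjoint position intervals of $W_n^{(k)}$. Hence each occurrence is counted exactly once by summing over blocks, which gives
\[
a_n = a_{n-1} + \cdots + a_{n-k+1} + \abs{k\oplus W_{n-k}^{(k)}}_{k\oplus B}.
\]
Since $x \mapsto x+k$ is injective on $\N$, occurrences of $k\oplus B$ in $k\oplus W$ correspond bijectively to occurrences of $B$ in $W$. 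The last term is therefore $b_{n-k}$.

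The third step checks that the relation $a_n - \sum_{i=1}^{k-1} a_{n-i} = b_{n-k}$ holds for every $n \in \N$ once we set $a_n = 0$ for $n<0$. For $n<k$ both sides vanish, by the first step and the convention on $b$. Multiplying by $y^n$ and summing over $n$ yields
\[
C_{k\oplus B}^{(k)}(y)\,\bigl(1-y-\cdots-y^{k-1}\bigr) = y^k\, C_B^{(k)}(y).
\]
By~\eqref{eq:Hk} with $k-1$ in place of $k$, which is legitimate since $k-1 \ge 2$, this is exactly~\eqref{eq:CBshift}.

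I expect the main obstacle to be the bookkeeping in the second step. One must make sure the included occurrences are partitioned by the blocks with no double counting, in particular the overlap between the clause $U=n$ and occurrences inside $k\oplus W_{n-k}^{(k)}$. One must also make sure the hypothesis is invoked only for $n\ge k$, while the range $n<k$ is covered separately by the largest-letter argument.
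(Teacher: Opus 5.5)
Your proposal is correct and follows essentially the same route as the paper: part (a) uses the fact that $k \oplus W_m^{(k)}$ is a suffix of $W_{m+k}^{(k)}$. Part (b) uses the block-wise recurrence $a_n = a_{n-1}+\cdots+a_{n-k+1}+b_{n-k}$ for $n \ge k$, together with the vanishing of $a_n$ for $n<k$ and summation against $H_{k-1}(y)$. Your extra bookkeeping makes explicit what the paper leaves implicit: the clause $U=n$ is absorbed into the final block, and the negative-index conventions let the recurrence hold for all $n$.
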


\begin{proof}
Choose $N \in \N$ such that $B \sqsubseteq W_N^{(k)}$.
By {\small\hyperref[lem:W-decomp]{Lemma~\ref*{lem:W-decomp}}} applied with $n = N + k$, the
word~$W_{N+k}^{(k)}$ has suffix $k \oplus W_N^{(k)}$; hence
\[
  k \oplus B
  \sqsubseteq k \oplus W_N^{(k)}
  \sqsubseteq W_{N+k}^{(k)}
  \sqsubseteq \mathbf{W}^{(k)},
\]
proving~\ref{item:shift-factor}.

For~\ref{item:shift-ogf}, fix $n \ge k$ and use
{\small\hyperref[lem:W-decomp]{Lemma~\ref*{lem:W-decomp}}}:
\[
  W_n^{(k)}
  = W_{n-1}^{(k)}\, W_{n-2}^{(k)} \cdots W_{n-k+1}^{(k)}\,
    \bigl(k \oplus W_{n-k}^{(k)}\bigr).
\]
Since $k \oplus B$ is neither bordering nor straddling, every
occurrence of $k \oplus B$ in~$W_n^{(k)}$ is contained in exactly
one of the~$k$ factors on the right-hand side. Since $\oplus$
preserves occurrence counts,
$\abs{k \oplus W_{n-k}^{(k)}}_{k \oplus B}
= c^{(k)}(B;\, n-k)$, so
\begin{equation}\label{eq:rec-cBshift}
  c^{(k)}(k \oplus B;\, n)
  = \sum_{j=1}^{k-1} c^{(k)}(k \oplus B;\, n-j)
    \;+\; c^{(k)}(B;\, n-k).
\end{equation}
Since $k \oplus B$ contains a digit at least~$k$, it cannot occur
in~$W_n^{(k)}$ for $0 \le n \le k-1$; hence
$c^{(k)}(k \oplus B;\, n) = 0$ for $0 \le n \le k-1$.
Multiplying~\eqref{eq:rec-cBshift} by~$y^n$, summing over
$n \ge k$, and recognizing $H_{k-1}(y)$ via~\eqref{eq:Hk}
gives~\eqref{eq:CBshift}.
\end{proof}

\begin{corollary}\label{cor:CBshift-digit}
Let $k \ge 3$ and $b \in \N$. Then
\[
  C_{k+b}^{(k)}(y) = y^k\, H_{k-1}(y)\, C_b^{(k)}(y).
\]
\end{corollary}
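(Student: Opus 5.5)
The corollary is the special case of Theorem~\ref{thm:CBshift}\ref{item:shift-ogf} with $B = b$, a single digit. The plan is therefore to check the hypothesis of part~\ref{item:shift-ogf} for $B = b$ and then read off \eqref{eq:CBshift} with $k \oplus B = k+b$.

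First, $b$ is a factor of $\mathbf{W}^{(k)}$, since every integer occurs as a letter. Part~\ref{item:shift-factor} then already shows that $k+b$ is a factor, although we do not need this.

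Next we must show that for every $n \ge k$, every occurrence of the one-letter word $k+b$ in $W_n^{(k)}$ is included. Bordering and straddling occurrences (Definition~\ref{def:types}\ref{item:bordering},\ref{item:straddling}) both require a factorization $U = XY$, respectively $U = PQ$, with both parts non-empty. A word of length~$1$ admits no such factorization, so it can have neither kind of occurrence.

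Equivalently, we can argue directly in the proof of Theorem~\ref{thm:CBshift}. For $n \ge k$, each position of $W_n^{(k)}$ lies in exactly one of the $k$ blocks $W_{n-1}^{(k)}, \dots, W_{n-k+1}^{(k)}, k \oplus W_{n-k}^{(k)}$ of Lemma~\ref{lem:W-decomp}. Hence the recurrence \eqref{eq:rec-cBshift} holds with $B = b$, using
\[
\abs{k \oplus W_{n-k}^{(k)}}_{k+b} = \abs{W_{n-k}^{(k)}}_b .
\]
We also need the initial values $c^{(k)}(k+b;\,n) = 0$ for $0 \le n \le k-1$. These hold because $k+b \ge k > n$, and by Lemma~\ref{lem:largest-letter} (or directly for $n = 0$) the largest digit of $W_n^{(k)}$ is $n$.

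Finally, multiply the recurrence by $y^n$ and sum over $n \ge k$. Writing $C = C_{k+b}^{(k)}$, this gives
\[
C(y)\bigl(1 - y - \cdots - y^{k-1}\bigr) = y^k C_b^{(k)}(y).
\]
Since $H_{k-1}(y) = 1/(1 - y - \cdots - y^{k-1})$ by \eqref{eq:Hk}, the claim follows.

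There is no real obstacle. The only point needing care is that the "included" hypothesis is automatic for length-one words. The generating-function step is inherited verbatim from Theorem~\ref{thm:CBshift}.
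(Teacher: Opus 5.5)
Your proposal is correct and follows the paper's proof: since a length-$1$ word admits no factorization into two non-empty parts, every occurrence of $k \oplus b$ is included, so Theorem~\ref{thm:CBshift}\ref{item:shift-ogf} applies with $B = b$. Your check that $b$ is a factor of $\mathbf{W}^{(k)}$ and your direct re-derivation of the recurrence and initial values are sound additions, though not needed for the argument.
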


\begin{proof}
Since $\abs{b} = \abs{k \oplus b} = 1$, bordering and straddling
occurrences are impossible, so every occurrence of $k \oplus b$ is
included and
{\small\hyperref[thm:CBshift]{Theorem~\ref*{thm:CBshift}}}\,\ref{item:shift-ogf}
applies with $B = b$.
\end{proof}

\section{Digit Occurrence Generating Functions}%
\label{section:4}

We specialize to the case $\abs{B} = 1$, writing
$c^{(k)}(d;\, n)$ for the number of occurrences of digit~$d$
in~$W_n^{(k)}$.

\begin{lemma}\label{lem:cpn-unified}
Let $k \ge 2$ and $d \in \N$. For every $n \in \N$,
\begin{equation}\label{eq:cpn-unified}
  c^{(k)}(d;\, n)
  = \sum_{i=\max\{n-k+1,\,0\}}^{n-1} c^{(k)}(d;\, i)
    \;+\; [d \ge k]\; c^{(k)}(d-k;\, n-k)
    \;+\; [d < k]\, [n = d].
\end{equation}
\end{lemma}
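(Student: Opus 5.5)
The plan is to count occurrences of the single digit $d$ in $W_n^{(k)}$ directly from the block decomposition of \hyperref[lem:W-decomp]{Lemma~\ref*{lem:W-decomp}}. A one-letter factor can never be bordering or straddling, so $\abs{W_n^{(k)}}_d$ is simply the sum of the occurrence counts of $d$ in the blocks. We split into the cases $n=0$, $1\le n<k$, and $n\ge k$, and check that each matches \eqref{eq:cpn-unified}.

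\emph{Case $n=0$.} Here $W_0^{(k)}=0$, so $c^{(k)}(d;0)=[d=0]$. On the right-hand side of \eqref{eq:cpn-unified}, the sum runs over $i$ from $0$ to $-1$ and is empty. The middle term requires $c^{(k)}(d-k;-k)$ at a negative index, which we interpret as $0$ (equivalently, we note that this index only appears when $n<k$). The last term is $[d<k][d=0]=[d=0]$, so the formula holds.

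\emph{Case $1\le n\le k-1$.} \hyperref[lem:W-decomp]{Lemma~\ref*{lem:W-decomp}} gives $W_n^{(k)}=W_{n-1}^{(k)}\cdots W_0^{(k)}\,n$. Counting letters block by block yields
\[
  c^{(k)}(d;n)=\sum_{i=0}^{n-1}c^{(k)}(d;i)+[d=n].
\]
Here $\max\{n-k+1,0\}=0$, which matches the range of the sum. The middle term vanishes because $n-k<0$; this uses the convention that counts at negative indices are zero, which I will state explicitly, since for $d\ge k$ the expression $c^{(k)}(d-k;n-k)$ would otherwise be undefined. Since $n<k$, we have $[d=n]=[d<k][n=d]$, as required.

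\emph{Case $n\ge k$.} Now $W_n^{(k)}=W_{n-1}^{(k)}\cdots W_{n-k+1}^{(k)}\,(k\oplus W_{n-k}^{(k)})$. The first $k-1$ blocks contribute $\sum_{i=n-k+1}^{n-1}c^{(k)}(d;i)$, and $\max\{n-k+1,0\}=n-k+1$, which matches the range of the sum. The last block contains $d$ exactly as often as $W_{n-k}^{(k)}$ contains $d-k$; this count is $0$ when $d<k$, since every digit of $k\oplus W$ is at least $k$. Hence the contribution of the last block is $[d\ge k]\,c^{(k)}(d-k;n-k)$. Finally, $[n=d]$ with $d<k\le n$ is always false, so the last term vanishes, as required.

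The only delicate point, and the main obstacle, is bookkeeping at the boundaries: the empty sum at $n=0$, and the fact that the term $c^{(k)}(d-k;n-k)$ is meaningful only for $n\ge k$. I will handle this by adopting the convention $c^{(k)}(\cdot\,;m)=0$ for $m<0$, or equivalently by observing that $[d\ge k]\,c^{(k)}(d-k;n-k)$ is only used when $n\ge k$. Beyond that, the argument reduces to the invariance of occurrence counts under the shift $\oplus$ and the fact that occurrences of a single letter always lie within a single block.
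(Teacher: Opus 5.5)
Your proposal is correct and follows the paper's proof essentially verbatim. It uses the same three-case split ($n=0$, $1\le n<k$, $n\ge k$) via the block decomposition, the shift-invariance of occurrence counts, and the fact that a single-letter occurrence lies inside one block. Your explicit convention $c^{(k)}(\cdot\,;m)=0$ for $m<0$ is exactly what the paper uses implicitly when it says the middle term vanishes because $n-k\notin\N$.
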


\begin{proof}
Fix $k \ge 2$ and $d \in \N$.
\begin{itemize}
  \item \emph{Case $n = 0$.}\enspace
    Since $W_0^{(k)} = 0$, we have $c^{(k)}(d;\, 0) = [d = 0]$.
    The sum in~\eqref{eq:cpn-unified} is empty, and the
    right-hand side equals $[d < k]\,[0 = d] = [d = 0]$.

  \item \emph{Case $1 \le n < k$.}\enspace
    By {\small\hyperref[lem:W-decomp]{Lemma~\ref*{lem:W-decomp}}},
    $W_n^{(k)} = W_{n-1}^{(k)}\, W_{n-2}^{(k)} \cdots
    W_0^{(k)}\, n$.
    Counting occurrences of~$d$ factorwise gives
    \[
      c^{(k)}(d;\, n)
      = \sum_{i=0}^{n-1} c^{(k)}(d;\, i) + [n = d].
    \]
    Since $n < k$, we have $\max\set{n-k+1,\, 0} = 0$ and
    $n - k \notin \N$, so the term
    $[d \ge k]\, c^{(k)}(d-k;\, n-k)$ vanishes. Moreover, if
    $n = d$ then $d < k$, so $[n = d] = [d < k]\,[n = d]$, which
    is exactly~\eqref{eq:cpn-unified}.

  \item \emph{Case $n \ge k$.}\enspace
    By {\small\hyperref[lem:W-decomp]{Lemma~\ref*{lem:W-decomp}}},
    $W_n^{(k)} = W_{n-1}^{(k)}\, W_{n-2}^{(k)} \cdots
    W_{n-k+1}^{(k)}\, \bigl(k \oplus W_{n-k}^{(k)}\bigr)$.
    Counting occurrences of~$d$ in each factor yields
    \[
      c^{(k)}(d;\, n)
      = \sum_{i=n-k+1}^{n-1} c^{(k)}(d;\, i)
        \;+\; \abs{k \oplus W_{n-k}^{(k)}}_d.
    \]
    If $d < k$, every digit in $k \oplus W_{n-k}^{(k)}$ is at
    least~$k$, so the last term vanishes. If $d \ge k$, shifting
    by~$k$ preserves occurrence counts:
    $\abs{k \oplus W_{n-k}^{(k)}}_d
    = \abs{W_{n-k}^{(k)}}_{d-k} = c^{(k)}(d-k;\, n-k)$.
    Since $\max\set{n-k+1,\, 0} = n-k+1$ and
    $[d < k]\,[n = d] = 0$ when $n \ge k$, this is
    precisely~\eqref{eq:cpn-unified}.\qedhere
\end{itemize}
\end{proof}

\begin{theorem}\label{thm:Cdigit}
Let $k \ge 3$ and $d \in \N$. Write $d = mk + r$ with
$m = \floor{d/k}$ and $0 \le r < k$. Then
\[
  C_d^{(k)}(y)
  = y^d\, \bigl(H_{k-1}(y)\bigr)^{\floor{d/k}+1}.
\]
\end{theorem}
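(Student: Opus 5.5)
Induction on the quotient $m=\floor{d/k}$ settles this: the base case $0\le d<k$ comes from Lemma~\ref{lem:cpn-unified}, and the step is exactly Corollary~\ref{cor:CBshift-digit}.

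\emph{Base case $d=r<k$.} For $d<k$ the term $[d\ge k]\,c^{(k)}(d-k;\,n-k)$ in \eqref{eq:cpn-unified} vanishes. The recurrence then reads
\[
c^{(k)}(d;\,n)=\sum_{i=\max\{n-k+1,0\}}^{n-1} c^{(k)}(d;\,i)+[n=d],
\]
valid for all $n\in\N$, with the convention that the lower summation limit is truncated at $0$. I will multiply by $y^n$ and sum over all $n\ge 0$. The sum $\sum_{i=n-k+1}^{n-1}$ runs over $i=n-j$ with $1\le j\le k-1$. Since $c^{(k)}(d;\,i)=0$ for $i<0$, the truncation at $0$ is automatic. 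This gives
\[
C_d^{(k)}(y)=(y+\cdots+y^{k-1})\,C_d^{(k)}(y)+y^d.
\]
Hence $C_d^{(k)}(y)=y^d/(1-y-\cdots-y^{k-1})=y^d H_{k-1}(y)$ by \eqref{eq:Hk} with $k-1$ in place of $k$. This matches the claim for $m=0$. The only point needing care is the boundary bookkeeping: checking that the truncated range for $n<k$ coincides with the full range $j=1,\dots,k-1$ once negative indices are read as zero. It does, because $\max\{n-k+1,0\}$ simply cuts off indices that are negative.

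\emph{Inductive step.} Suppose $d=mk+r$ with $m\ge1$. By Corollary~\ref{cor:CBshift-digit} applied with $b=d-k$,
\[
C_d^{(k)}(y)=y^kH_{k-1}(y)\,C_{d-k}^{(k)}(y).
\]
Here $d-k=(m-1)k+r$, so the induction hypothesis gives $C_{d-k}^{(k)}(y)=y^{d-k}H_{k-1}(y)^{m}$. Substituting yields $y^dH_{k-1}(y)^{m+1}$, as claimed. Corollary~\ref{cor:CBshift-digit} requires $k\ge3$, which is our hypothesis; this restriction also ensures $H_{k-1}$ is the generating function of a genuine $(k-1)$-step recurrence.

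There is no real obstacle. The only step needing attention is the base-case generating-function extraction, in particular the uniform treatment of small $n$ and the placement of the term $[n=d]$.
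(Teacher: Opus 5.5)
Your proposal is correct and follows the paper's proof: the paper also gets the base case $0 \le d < k$ by specializing Lemma~\ref{lem:cpn-unified} and extracting $C_r^{(k)}(y) = y^r H_{k-1}(y)$, then inducts on $\floor{d/k}$ using Corollary~\ref{cor:CBshift-digit}. Your handling of the truncated summation range for $n<k$ (reading negative indices as zero) is the same convention the paper uses.
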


\begin{proof}
Fix $k \ge 3$, $m \in \N$, and $0 \le r < k$.

\smallskip
\noindent\emph{Step~1: the base case $m = 0$.}\enspace
Since $r < k$,
{\small\hyperref[lem:cpn-unified]{Lemma~\ref*{lem:cpn-unified}}} (with
$d = r$) yields, for all $n \in \N$,
\[
  c^{(k)}(r;\, n)
  = \sum_{j=1}^{k-1} c^{(k)}(r;\, n-j) + [n = r],
\]
where terms with $n - j \notin \N$ are understood as zero.
Multiplying by~$y^n$, summing over $n \in \N$, and
recognizing $H_{k-1}(y)$ via~\eqref{eq:Hk} gives
\begin{equation}\label{eq:Cr-base}
  C_r^{(k)}(y) = y^r\, H_{k-1}(y).
\end{equation}

\smallskip
\noindent\emph{Step~2: induction on~$m$.}\enspace
We claim that for every $m \in \N$,
\begin{equation}\label{eq:mk-step}
  C_{mk+r}^{(k)}(y)
  = \bigl(y^k\, H_{k-1}(y)\bigr)^m\, C_r^{(k)}(y).
\end{equation}
Assume~\eqref{eq:mk-step} holds for some $m \in \N$.
By
{\small\hyperref[cor:CBshift-digit]{Corollary~\ref*{cor:CBshift-digit}}},
$C_{(m+1)k+r}^{(k)}(y)
= y^k H_{k-1}(y)\, C_{mk+r}^{(k)}(y)$;
substituting~\eqref{eq:mk-step} completes the induction.
Combining with~\eqref{eq:Cr-base} yields
$C_{mk+r}^{(k)}(y) = y^{mk+r} (H_{k-1}(y))^{m+1}$.
\end{proof}

\addtocontents{toc}{\protect\setcounter{tocdepth}{2}}
\section{Length-\texorpdfstring{$2$}{2} Factor Occurrences}%
\label{section:5}

We now turn to length-$2$ factors of~$\mathbf{W}^{(k)}$,
identifying the complete set in
\subsecref{subsection:5.1} and computing generating functions for
their occurrence counts in \subsecref{subsection:5.2}.
We write $(x.y)$ for the length-$2$ word~$xy$.
Fix $k \ge 2$ and define
\[
\begin{aligned}
  \mathcal{B}_1^{(k)}
    &\defeq \set*{(ki) \oplus (a.k) :
            i \in \N,\; a \in \N_{>0}},\\
  \mathcal{B}_2^{(k)}
    &\defeq \set*{(ki) \oplus (0.b) :
            i \in \N,\; 1 \le b \le k-1},\\
  \mathcal{B}_3^{(k)}
    &\defeq \set*{(a.0) : a \in \N_{>0}},\\
  \mathcal{B}^{(k)}
    &\defeq \mathcal{B}_1^{(k)} \cup \mathcal{B}_2^{(k)}
            \cup \mathcal{B}_3^{(k)}.
\end{aligned}
\]

For example, when $k = 3$ the first few members of each family are
\[
\begin{aligned}
  \mathcal{B}_1^{(3)} &\ni (1.3),\; (2.3),\; (4.6),\; (5.6),\; \dots,\\
  \mathcal{B}_2^{(3)} &\ni (0.1),\; (0.2),\; (3.4),\; (3.5),\; \dots,\\
  \mathcal{B}_3^{(3)} &\ni (1.0),\; (2.0),\; (3.0),\; \dots.
\end{aligned}
\]

\begin{lemma}\label{lem:Bk-shift}
For $k \ge 2$, we have
$k \oplus \mathcal{B}^{(k)} \subseteq \mathcal{B}^{(k)}$.
\end{lemma}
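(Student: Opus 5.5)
We check each family separately, using that the shift operator is additive: $k \oplus ((ki) \oplus U) = (k(i+1)) \oplus U$ for every word $U$, which follows directly from the definition of~$\oplus$.

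For $\mathcal{B}_1^{(k)}$, an element has the form $(ki)\oplus(a.k)$ with $i\in\N$ and $a\in\N_{>0}$. Applying $k\oplus$ gives $(k(i+1))\oplus(a.k)$. Since $i+1\in\N$ and $a$ is unchanged, this again lies in $\mathcal{B}_1^{(k)}$. The same argument applies to $\mathcal{B}_2^{(k)}$: $k\oplus\bigl((ki)\oplus(0.b)\bigr)=(k(i+1))\oplus(0.b)$ with the same $1\le b\le k-1$, so the image stays in $\mathcal{B}_2^{(k)}$.

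The family $\mathcal{B}_3^{(k)}$ is the only one that is not stable under the shift, and this is the step needing care. For $a\in\N_{>0}$ we get $k\oplus(a.0)=(a+k.\,k)$. This word does not lie in $\mathcal{B}_3^{(k)}$, because its second digit is $k\neq 0$. It does, however, lie in $\mathcal{B}_1^{(k)}$: taking $i=0$ and $a'=a+k\in\N_{>0}$, we have $(0)\oplus(a'.k)=(a+k.\,k)$. Hence $k\oplus\mathcal{B}_3^{(k)}\subseteq\mathcal{B}_1^{(k)}$.

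Combining the three cases, we obtain $k\oplus\mathcal{B}^{(k)}\subseteq\mathcal{B}_1^{(k)}\cup\mathcal{B}_2^{(k)}\subseteq\mathcal{B}^{(k)}$, which proves the lemma. No earlier result is needed beyond the definition of~$\oplus$; the only point that requires attention is routing $\mathcal{B}_3^{(k)}$ into $\mathcal{B}_1^{(k)}$ rather than back into itself.
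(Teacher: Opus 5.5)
Your proof is correct and follows the paper's argument exactly: the shift raises the index $i$ by one in $\mathcal{B}_1^{(k)}$ and $\mathcal{B}_2^{(k)}$, and sends $(a.0)\in\mathcal{B}_3^{(k)}$ to $(a{+}k\,.\,k)\in\mathcal{B}_1^{(k)}$ with $i=0$. Your remark that the image actually lies in $\mathcal{B}_1^{(k)}\cup\mathcal{B}_2^{(k)}$ is a correct sharpening of the statement.
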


\begin{proof}
For $U \in \mathcal{B}_1^{(k)} \cup \mathcal{B}_2^{(k)}$, shifting
by~$k$ increments the index~$i$ to $i + 1$, so $k \oplus U$
remains in the same family. For
$U = (a.0) \in \mathcal{B}_3^{(k)}$, we have
$k \oplus U = (a{+}k\,.\,k) \in \mathcal{B}_1^{(k)}$ (take
$i = 0$ and note $a + k \ge 1$).
\end{proof}

\subsection{Classification into families}%
\label{subsection:5.1}

In this subsection we characterize the length-$2$ factors of the
$k$-Bonacci word~$\mathbf{W}^{(k)}$.

\begin{lemma}\label{lem:straddling-factor-length2}
Let $k \ge 2$ and $n \in \N_{>0}$. If a length-$2$ factor~$B$
has a straddling occurrence in~$W_n^{(k)}$, then
\[
  B =
  \begin{cases}
    (0.n),       & \text{if } n < k;\\
    (n-k+1\,.\,k), & \text{if } n \ge k.
  \end{cases}
\]
In particular, there is exactly one such straddling occurrence.
\end{lemma}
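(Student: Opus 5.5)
Since $\abs{B} = 2$, a $(P,Q)$-straddling occurrence in the sense of~\ref{item:straddling} forces $\abs{P} = \abs{Q} = 1$. So $B = pq$, where $p$ is the last letter of $W_{n-1}^{(k)} W_{n-2}^{(k)} \cdots W_s^{(k)}$ and $q$ is the first letter of the final block of the decomposition in \thmref{lem:W-decomp}{Lemma}. That decomposition has exactly one final block, hence exactly one junction at which a straddling occurrence can sit. This already gives uniqueness, so the plan is just to identify the two letters $p$ and $q$ in each case.

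\emph{Case $n < k$.} Here $s = 0$, the final block is the single letter $n$, and the preceding word ends with $W_0^{(k)} = 0$. So $P = 0$, $Q = n$, and $B = (0.n)$.

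\emph{Case $n \ge k$.} The final block is $k \oplus W_{n-k}^{(k)}$. Every $W_m^{(k)}$ begins with $0$: for $m = 0$ this is immediate, and for $m > 0$ it follows by induction from \thmref{lem:W-decomp}{Lemma}, since $W_m^{(k)}$ begins with $W_{m-1}^{(k)}$. Hence $Q = k$. The preceding concatenation ends with $W_s^{(k)}$, where $s = n-k+1 \ge 1$. By \thmref{lem:largest-letter}{Lemma}, the last letter of $W_s^{(k)}$ is $s$, so $P = n-k+1$ and $B = (n-k+1\,.\,k)$.

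The only point needing care is the case $s = 0$, where \thmref{lem:largest-letter}{Lemma} does not apply because it assumes $n \in \N_{>0}$. This case arises only when $n < k$, and there the last letter is read off directly from $W_0^{(k)} = 0$. Beyond that, the argument is bookkeeping of first and last letters, so I expect no real obstacle.
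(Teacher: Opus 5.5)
Your proposal is correct and follows essentially the same route as the paper. Length $2$ forces $|P| = |Q| = 1$; you then read off the last letter of the prefix (from $W_0^{(k)} = 0$ when $n<k$, or from the largest-letter lemma applied to $W_{n-k+1}^{(k)}$ when $n \ge k$) and the first letter $k$ of the shifted terminal block, and uniqueness comes from the single terminal junction. Your extra remarks — the induction showing every iterate begins with $0$, and the check that $s = n-k+1 \ge 1$ before invoking the largest-letter lemma — only make explicit what the paper uses implicitly.
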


\begin{proof}
Since $\abs{B} = 2$, the decomposition $B = UV$ in~\ref{item:straddling}
forces $\abs{U} = \abs{V} = 1$, so $B$ consists of
the last letter of the prefix and the first letter of the
terminal block in
{\small\hyperref[lem:W-decomp]{Lemma~\ref*{lem:W-decomp}}}.
\begin{itemize}
  \item If $n < k$, the terminal block is the single letter~$n$,
    and the preceding prefix ends with $W_0^{(k)} = 0$, so
    $B = (0.n)$.

  \item If $n \ge k$, the prefix ends with~$W_{n-k+1}^{(k)}$,
    whose last letter is $n - k + 1$ by
    {\small\hyperref[lem:largest-letter]{Lemma~\ref*{lem:largest-letter}}}. Moreover, $W_{n-k}^{(k)}$ begins with~$0$, so
    $k \oplus W_{n-k}^{(k)}$ begins with~$k$. Therefore
    $B = (n-k+1\,.\,k)$.
\end{itemize}
Since
{\small\hyperref[lem:W-decomp]{Lemma~\ref*{lem:W-decomp}}} produces a
unique terminal boundary, the straddling occurrence is unique.
\end{proof}

\begin{lemma}\label{lem:Fac2-subset-Bk}
Let $k \ge 2$ and $n \in \N$. Then
$\mathrm{Fac}_2\!\bigl(W_n^{(k)}\bigr) \subseteq
\mathcal{B}^{(k)}$.
\end{lemma}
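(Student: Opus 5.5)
We argue by strong induction on~$n$. For the base, $W_0^{(k)} = 0$ has no length-$2$ factor, so the claim is vacuous. For $n \ge 1$, we assume $\mathrm{Fac}_2(W_i^{(k)}) \subseteq \mathcal{B}^{(k)}$ for all $i < n$. Each length-$2$ occurrence in $W_n^{(k)}$ is then classified via \cref{def:types}, using the decomposition of \cref{lem:W-decomp}. Since $|B| = 2$, any occurrence not contained in a single block must consist of the last letter of one block followed by the first letter of the next. So every occurrence is either included, or it sits at one of the internal junctions. These junctions are of two kinds: $W_j^{(k)} \mid W_{j-1}^{(k)}$ for $s+1 \le j \le n-1$ (bordering of type $j$), and the terminal junction (straddling).

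\emph{Included occurrences.} If $B \sqsubseteq W_i^{(k)}$ with $i < n$, the induction hypothesis gives $B \in \mathcal{B}^{(k)}$. If $n \ge k$ and $B \sqsubseteq k \oplus W_{n-k}^{(k)}$, then $B = k \oplus B'$ with $B' \in \mathrm{Fac}_2(W_{n-k}^{(k)}) \subseteq \mathcal{B}^{(k)}$ by induction. \cref{lem:Bk-shift} then yields $B \in \mathcal{B}^{(k)}$. The single-letter case $U = n$ in \ref{item:included} is irrelevant for length~$2$.

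\emph{Bordering occurrences.} At the junction $W_j^{(k)} \mid W_{j-1}^{(k)}$ with $j \ge 1$, the last letter of $W_j^{(k)}$ is $j$ by \cref{lem:largest-letter}. Every iterate $W_{j-1}^{(k)}$ begins with~$0$, which is immediate from $\phi_k(0) = 0\,1$ or from \cref{lem:W-decomp}. Hence $B = (j.0) \in \mathcal{B}_3^{(k)}$ since $j \ge 1$; this matches part~\ref{item:rem-bordering} of the Remark.

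\emph{Straddling occurrences.} By \cref{lem:straddling-factor-length2}, if $n < k$ then $B = (0.n)$ with $1 \le n \le k-1$, which lies in $\mathcal{B}_2^{(k)}$ with $i = 0$ and $b = n$. If $n \ge k$, then $B = (n-k+1\,.\,k)$, which lies in $\mathcal{B}_1^{(k)}$ with $i = 0$ and $a = n-k+1 \ge 1$.

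The only genuinely delicate point is making sure the case split is exhaustive. We must check that a length-$2$ window either lies inside one block of \cref{lem:W-decomp} or straddles exactly one adjacent pair of blocks. When $n < k$, the blocks $W_0^{(k)}$ (and the final letter $n$) have length~$1$, but this still causes no trouble because windows of length~$2$ can only cross one boundary. We also need the fact that every $W_m^{(k)}$ starts with $0$ and, for $m \ge 1$, ends with $m$, including $m = 0$ at the junction $W_1^{(k)} \mid W_0^{(k)}$, where the factor is $(1.0)$. Once this bookkeeping is done, the three cases above cover all occurrences and the induction closes.
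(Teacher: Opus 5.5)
Your proof is correct and follows essentially the same route as the paper. Both argue by induction on $n$: occurrences inside a block are handled by the induction hypothesis together with the shift-closure Lemma~\ref{lem:Bk-shift}; bordering occurrences give $(j.0)\in\mathcal{B}_3^{(k)}$; and the straddling occurrence is identified by Lemma~\ref{lem:straddling-factor-length2}. The differences are only cosmetic: you obtain $(j.0)$ directly from Lemma~\ref{lem:largest-letter} and the fact that every iterate begins with $0$, rather than citing the Remark, and your uniform treatment of $n\ge 1$ absorbs the paper's separate base case $n=1$.
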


\begin{proof}
We proceed by induction on~$n$. For $n = 0$ the set
$\mathrm{Fac}_2(W_0^{(k)})$ is empty. For $n = 1$ we have
$W_1^{(k)} = 01$, so
$\mathrm{Fac}_2(W_1^{(k)}) = \set{(0.1)}
\subseteq \mathcal{B}_2^{(k)}$.

Assume $n \ge 2$ and that the inclusion holds for all $m < n$.
Let $B \in \mathrm{Fac}_2(W_n^{(k)})$ and consider an occurrence
of~$B$ in the decomposition of
{\small\hyperref[lem:W-decomp]{Lemma~\ref*{lem:W-decomp}}}.

If the occurrence is contained in a single factor~$W_j^{(k)}$
with $j < n$, then
$B \in \mathrm{Fac}_2(W_j^{(k)}) \subseteq \mathcal{B}^{(k)}$
by the induction hypothesis. If $n \ge k$ and the occurrence
lies in the terminal block $k \oplus W_{n-k}^{(k)}$, then
$B = k \oplus B'$ for some
$B' \in \mathrm{Fac}_2(W_{n-k}^{(k)})
\subseteq \mathcal{B}^{(k)}$,
and hence $B \in \mathcal{B}^{(k)}$ by
{\small\hyperref[lem:Bk-shift]{Lemma~\ref*{lem:Bk-shift}}}.

It remains to treat boundary crossings. If the boundary lies
within the prefix
$W_{n-1}^{(k)} \cdots W_s^{(k)}$ (where
$s = \max\set{n-k+1,\, 0}$), then $B$ is a bordering factor of
some type~$j$;
since $\abs{B} = 2$, the Remark following
    {\small\hyperref[def:types]{Definition~\ref*{def:types}}}\,\ref{item:rem-bordering}
    gives $B = (j\,.\,0)$ with $j \in \N_{>0}$, so
$B \in \mathcal{B}_3^{(k)}$.
If the boundary is the terminal one, then $B$ is straddling;
{\small\hyperref[lem:straddling-factor-length2]{Lemma~\ref*{lem:straddling-factor-length2}}}
gives $B = (0.n) \in \mathcal{B}_2^{(k)}$ when $n < k$, and
$B = (n-k+1\,.\,k) \in \mathcal{B}_1^{(k)}$ when $n \ge k$.
\end{proof}

\begin{theorem}\label{thm:Fac2-characterization}
Let $k \ge 3$. Then
$\mathrm{Fac}_2\!\bigl(\mathbf{W}^{(k)}\bigr) =
\mathcal{B}^{(k)}$.
\end{theorem}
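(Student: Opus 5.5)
One inclusion is already in hand. Every length-$2$ factor of $\mathbf{W}^{(k)}$ lies in some prefix $W_n^{(k)}$, so \thmref{lem:Fac2-subset-Bk}{Lemma} gives $\mathrm{Fac}_2(\mathbf{W}^{(k)}) \subseteq \mathcal{B}^{(k)}$. The proof therefore reduces to the reverse inclusion: every element of $\mathcal{B}_1^{(k)} \cup \mathcal{B}_2^{(k)} \cup \mathcal{B}_3^{(k)}$ must actually occur. My plan is to show first that the ``$i=0$'' members of each family occur, using the boundary structure of \thmref{lem:W-decomp}{Lemma}. Then I will lift to arbitrary $i$ with \thmref{thm:CBshift}{Theorem}\,\ref{item:shift-factor}, which says that $k \oplus B$ is a factor whenever $B$ is. Induction on $i$ then gives $(ki)\oplus B$ for all $i$.

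\emph{Family $\mathcal{B}_3^{(k)}$.} Fix $a \ge 1$ and look at $W_{a+1}^{(k)}$. By \thmref{lem:W-decomp}{Lemma} it begins with $W_a^{(k)} W_{a-1}^{(k)}$ when $a+1<k$. When $a+1 \ge k$, the block $W_{a-1}^{(k)}$ still follows $W_a^{(k)}$, because $k\ge 3$ forces $a-1 \ge a+1-k+1$. (This also holds for $a-1 \ge 0$.) By \thmref{lem:largest-letter}{Lemma}, $W_a^{(k)}$ ends with $a$, and every iterate begins with $0$. Hence $(a.0)$ occurs. This is the step where $k \ge 3$ is used: for $k=2$ the second block would be the shifted one. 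The case $a = 1$ needs separate care because $W_0^{(k)}=0$. For $k\ge3$, $W_2^{(k)} = W_1^{(k)}W_0^{(k)}\cdots$ contains $10$, so this case is also fine.

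\emph{Families $\mathcal{B}_2^{(k)}$ and $\mathcal{B}_1^{(k)}$ at $i=0$.} For $1\le b\le k-1$, \thmref{lem:straddling-factor-length2}{Lemma} applied with $n=b<k$ shows that $W_b^{(k)}$ ends with $(0.b)$; equivalently, $\phi_k(b-1)=0\,b$. For $(a.k)$ with $a\ge 1$, take $n = a+k-1 \ge k$. The same lemma gives the straddling factor $(n-k+1\,.\,k) = (a.k)$ in $W_n^{(k)}$.

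\emph{General $i$.} Family $\mathcal{B}_2$ at level $i$ is just $(ki)\oplus(0.b)$, obtained by applying the shift $i$ times to $(0.b)$. Family $\mathcal{B}_1$ at level $i$ is $(ki)\oplus(a.k)$, obtained in the same way from $(a.k)$. Both follow by iterating \thmref{thm:CBshift}{Theorem}\,\ref{item:shift-factor}.

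The main obstacle is the indexing check for $\mathcal{B}_3^{(k)}$, namely that $W_{a-1}^{(k)}$ really follows $W_a^{(k)}$ inside the unshifted prefix of $W_{a+1}^{(k)}$ for every $a\ge1$. I would verify this separately for the two branches of \thmref{lem:W-decomp}{Lemma}.
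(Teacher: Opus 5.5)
Your proposal is correct and follows essentially the same route as the paper. It uses Lemma~\ref{lem:Fac2-subset-Bk} for $\subseteq$, the straddling factors of $W_{a+k-1}^{(k)}$ and $W_b^{(k)}$ as witnesses for $(a.k)$ and $(0.b)$, the boundary $W_a^{(k)}\,W_{a-1}^{(k)}$ inside $W_{a+1}^{(k)}$ for $(a.0)$, and the suffix relation $k \oplus W_m^{(k)} \le_{\mathrm{suf}} W_{m+k}^{(k)}$ (which is exactly Theorem~\ref{thm:CBshift}\,\ref{item:shift-factor}) to reach general $i$. You also check explicitly that $W_{a-1}^{(k)}$ lies in the unshifted prefix of $W_{a+1}^{(k)}$ precisely because $k \ge 3$, a detail the paper leaves implicit; your separate treatment of $a=1$ is unnecessary but harmless.
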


\begin{proof}
Fix $k \ge 3$. The inclusion $\subseteq$ is immediate: every
$B \in \mathrm{Fac}_2(\mathbf{W}^{(k)})$ lies in some
$\mathrm{Fac}_2(W_n^{(k)}) \subseteq \mathcal{B}^{(k)}$ by
{\small\hyperref[lem:Fac2-subset-Bk]{Lemma~\ref*{lem:Fac2-subset-Bk}}}.

For the reverse inclusion, we show that each
$B \in \mathcal{B}^{(k)}$ occurs in~$\mathbf{W}^{(k)}$ by
exhibiting an explicit witnessing iterate. By
{\small\hyperref[lem:W-decomp]{Lemma~\ref*{lem:W-decomp}}}, for every
$m \in \N$ the word~$W_{m+k}^{(k)}$ has suffix
$k \oplus W_m^{(k)}$. Hence, if a length-$2$ word~$U$ occurs
in~$W_m^{(k)}$, then $k \oplus U$ occurs in~$W_{m+k}^{(k)}$.
Iterating, $(ki) \oplus U$ occurs in~$W_{m+ik}^{(k)}$ for every
$i \in \N$.
\begin{enumerate}
  \item[\textup{(i)}]
    \emph{$B \in \mathcal{B}_1^{(k)}$.}\enspace
    Write $B = (ki) \oplus (a.k)$ with $a \in \N_{>0}$. By
    {\small\hyperref[lem:straddling-factor-length2]{Lemma~\ref*{lem:straddling-factor-length2}}}
    with $n = a + k - 1$, the factor $(a.k)$ occurs
    in~$W_{a+k-1}^{(k)}$; hence $B$ occurs
    in~$W_{a+k-1+ik}^{(k)}$.

  \item[\textup{(ii)}]
    \emph{$B \in \mathcal{B}_2^{(k)}$.}\enspace
    Write $B = (ki) \oplus (0.b)$ with $1 \le b \le k-1$. By
    {\small\hyperref[lem:straddling-factor-length2]{Lemma~\ref*{lem:straddling-factor-length2}}}
    with $n = b$, the factor $(0.b)$ occurs in~$W_b^{(k)}$;
    hence $B$ occurs in~$W_{b+ik}^{(k)}$.

  \item[\textup{(iii)}]
    \emph{$B \in \mathcal{B}_3^{(k)}$.}\enspace
    Then $B = (a.0)$ with $a \in \N_{>0}$. By
    {\small\hyperref[lem:W-decomp]{Lemma~\ref*{lem:W-decomp}}},
    $W_{a+1}^{(k)}$ begins with
    $W_a^{(k)}\, W_{a-1}^{(k)}$; the last letter of
    $W_a^{(k)}$ is~$a$ by
    {\small\hyperref[lem:largest-letter]{Lemma~\ref*{lem:largest-letter}}}
    and $W_{a-1}^{(k)}$ begins with~$0$, so
    $B = (a.0) \in \mathrm{Fac}_2(W_{a+1}^{(k)})$.\qedhere
\end{enumerate}
\end{proof}

\begin{remark*}
For $k = 2$, the digit~$0$ occurs only once
in~$\mathbf{W}^{(2)}$, so no factor of the form $(a.0)$ with
$a > 0$ appears. In this case,
$\mathrm{Fac}_2(\mathbf{W}^{(2)}) =
\mathcal{B}_1^{(2)} \cup \mathcal{B}_2^{(2)}$.
\end{remark*}

\subsection{Generating functions by family}%
\label{subsection:5.2}

We compute $C_B^{(k)}(y)$ for each family, treating them in the
order $\mathcal{B}_2^{(k)}$, $\mathcal{B}_3^{(k)}$,
$\mathcal{B}_1^{(k)}$: the family~$\mathcal{B}_2^{(k)}$ reduces
directly to digit counts, $\mathcal{B}_3^{(k)}$ is
self-contained, and $\mathcal{B}_1^{(k)}$ depends on the
$\mathcal{B}_3^{(k)}$ results.


\begin{lemma}\label{lem:c-0b}
Let $k \ge 2$ and $1 \le b \le k-1$. For all $n \in \N$,
\[
  c^{(k)}\bigl((0.b);\, n\bigr)
  = \sum_{i=\max\{n-k+1,\,0\}}^{n-1}
      c^{(k)}\bigl((0.b);\, i\bigr)
    \;+\; [n = b].
\]
\end{lemma}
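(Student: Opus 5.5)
The plan is to count occurrences of $(0.b)$ in $W_n^{(k)}$ block by block, using the decomposition of \thmref{lem:W-decomp}{Lemma}. Each occurrence is either included, bordering, or straddling (\thmref{def:types}{Definition}). For each type we determine its contribution separately, in three cases: $n=0$, $1\le n<k$, and $n\ge k$. The case $n=0$ is trivial. We have $W_0^{(k)}=0$, so the count is $0$; the sum is empty, and $[0=b]=0$ since $b\ge 1$.

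For the included occurrences, an occurrence inside a block $W_i^{(k)}$ with $s\le i\le n-1$ contributes $c^{(k)}((0.b);i)$, and summing over these blocks gives exactly the sum in the statement. When $n<k$, the terminal block is the single letter $n$, which cannot contain a length-$2$ factor. When $n\ge k$, the terminal block $k\oplus W_{n-k}^{(k)}$ contains only digits $\ge k$. Since $b\le k-1$ and $0<k$, the factor $(0.b)$ cannot occur there, so this block contributes nothing. This is the point where the hypothesis $b\le k-1$ is used.

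For the bordering occurrences, the Remark after \thmref{def:types}{Definition} shows that a length-$2$ bordering occurrence of type $j$ is the factor $(j.0)$ with $j\ge s+1\ge 1$. Its second letter is $0\neq b$, so no bordering occurrence equals $(0.b)$. For the straddling occurrences, \thmref{lem:straddling-factor-length2}{Lemma} says there is exactly one straddling length-$2$ occurrence. It is $(0.n)$ if $n<k$ and $(n-k+1\,.\,k)$ if $n\ge k$. The latter has second digit $k\ne b$. The former equals $(0.b)$ exactly when $n=b$. So the straddling contribution is $[n=b]$ in all cases, and since $b<k$ this is automatically $0$ when $n\ge k$.

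Adding the three contributions gives the identity. The main point requiring care is that the classification is exhaustive and disjoint for length-$2$ occurrences. Every adjacent pair of positions in $W_n^{(k)}$ lies either inside one block or across exactly one junction between consecutive blocks. An internal junction $W_j^{(k)}\mid W_{j-1}^{(k)}$ gives a bordering occurrence of type $j$. The final junction gives the straddling occurrence. Hence each occurrence is counted exactly once.
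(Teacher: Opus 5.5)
Your proof is correct and follows the paper's argument: included occurrences give the sum, the shifted terminal block contributes nothing, bordering occurrences have the form $(j.0)$, and the unique straddling occurrence equals $(0.b)$ exactly when $n=b$. You also handle $n=0$ (where the block decomposition lemma does not apply) and make the exhaustiveness of the three-type classification explicit, both of which the paper leaves implicit. One small point: the digit $0$ alone already rules out the terminal block, so $b\le k-1$ is really needed only to make $[n=b]$ vanish for $n\ge k$.
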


\begin{proof}
By {\small\hyperref[lem:W-decomp]{Lemma~\ref*{lem:W-decomp}}},
the factor $(0.b)$ cannot be bordering (such factors
have the form~$(j\,.\,0)$), so included occurrences contribute
the sum. Since $b < k$, every digit in
$k \oplus W_{n-k}^{(k)}$ is at least~$k$, so the terminal block
contributes nothing. The straddling factor equals $(0.b)$
precisely when $n = b$, by
{\small\hyperref[lem:straddling-factor-length2]{Lemma~\ref*{lem:straddling-factor-length2}}}.
\end{proof}

\begin{lemma}\label{lem:B2-to-digit}
Let $k \ge 2$, $i \in \N$, and $1 \le b \le k-1$. Then for
every $n \in \N$,
\[
  c^{(k)}\bigl((ki) \oplus (0.b);\, n\bigr) = c^{(k)}(b + ki;\, n).
\]
\end{lemma}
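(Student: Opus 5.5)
The natural approach is a bijection between occurrences: every occurrence of the digit $b+ki$ in $W_n^{(k)}$ is immediately preceded by $ki$, and every occurrence of $ki$ followed by $b+ki$ contributes exactly one occurrence of $b+ki$. Since $1\le b\le k-1$, the digit $b+ki$ has remainder $b\neq 0$ modulo $k$. So it suffices to prove the following claim: \emph{in every $W_n^{(k)}$, each occurrence of a digit $d=ki+b$ with $1\le b\le k-1$ is not the first letter, and the letter preceding it is $ki$.} Granting the claim, the map sending an occurrence of $(ki)\oplus(0.b)$ at position $p$ to the occurrence of $b+ki$ at position $p+1$ is a bijection, and the identity $c^{(k)}((ki)\oplus(0.b);n)=c^{(k)}(b+ki;n)$ follows.

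An alternative is to compare recurrences. For $i=0$, Lemma~\ref{lem:c-0b} and Lemma~\ref{lem:cpn-unified} (with $d=b<k$) give the same recurrence with the same initial data, so the sequences agree. For $i\ge1$, one would use Theorem~\ref{thm:CBshift}\ref{item:shift-ogf} together with Corollary~\ref{cor:CBshift-digit} and induct on $i$. That route requires checking that $(ki)\oplus(0.b)$ has no bordering or straddling occurrences (bordering factors are $(j.0)$, and straddling factors are $(0.n)$ or $(n-k+1.k)$, whose second digit is $\equiv 0\pmod k$ or equals $n<k$ with first digit $0$; for $i\ge1$ the first digit $ki\ge k>0$ rules these out). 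The result is an equality of generating functions for all $n$. I would still prefer the direct bijective argument, since it is cleaner.

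To prove the claim I would use the morphism directly rather than the block decomposition. We have $W_n^{(k)}=\phi_k(W_{n-1}^{(k)})$ for $n\ge1$, and $W_0^{(k)}=0$ has no letter with nonzero remainder. In $\phi_k(W_{n-1}^{(k)})$, each image $\phi_k(x)$ is either $(ki')(ki'+j+1)$ or the single letter $ki'+k$. The only letters of nonzero remainder modulo $k$ therefore arise as the second letter $ki'+j+1$ of an image $(ki')(ki'+j+1)$ with $0\le j\le k-2$, and they are immediately preceded, inside the same image, by $ki'$. For the digit $ki+b$ we get $i'=i$ and $j+1=b$, so the preceding letter is $ki$. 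This proves the claim for all $n\ge1$, and it holds vacuously for $n=0$.

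The only point needing care is the bijection bookkeeping: distinct occurrences of $b+ki$ give distinct occurrences of the factor, and conversely every factor occurrence $(ki)(ki+b)$ yields an occurrence of $ki+b$. Both directions are immediate from the claim. I therefore expect no real obstacle; the step to state carefully is that letters of nonzero residue are produced only as second letters of two-letter images, which is read off from the definition of $\phi_k$.
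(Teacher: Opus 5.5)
Your proof is correct and is essentially the paper's argument: you write $W_n^{(k)}=\phi_k(W_{n-1}^{(k)})$, observe that letters with nonzero residue modulo $k$ arise only as second letters of two-letter images $(ki')(ki'+j+1)$, and conclude that every occurrence of $ki+b$ is preceded by $ki$, which gives the bijection. The paper also separately disposes of $n<b+ki$ via Lemma~\ref{lem:largest-letter}, and your uniform treatment of all $n\ge 1$ (with $n=0$ trivial) shows this step is unnecessary; note only that your alternative recurrence route relies on Theorem~\ref{thm:CBshift} and Corollary~\ref{cor:CBshift-digit}, which are stated for $k\ge 3$, whereas the lemma allows $k=2$.
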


\begin{proof}
Set $p \defeq b + ki$ and $B = (ki\,.\,p)$. If $n < p$, the
digit~$p$ does not occur in~$W_n^{(k)}$ (by
{\small\hyperref[lem:largest-letter]{Lemma~\ref*{lem:largest-letter}}}),
so $c^{(k)}(B;\, n) = c^{(k)}(p;\, n) = 0$.

Assume $n \ge p$ and write
$W_n^{(k)} = \phi_k(W_{n-1}^{(k)})$. Every image
$\phi_k(t)$ begins with a multiple of~$k$. Since
$p \equiv b \not\equiv 0 \pmod{k}$, every occurrence of~$p$
in~$W_n^{(k)}$ must be the \emph{second} letter of
some~$\phi_k(t)$. Now
$\phi_k(ki + j) = (ki)(ki + j + 1)$ for $0 \le j \le k-2$, so
the unique preimage giving second letter~$p$ is
$\phi_k(ki + b - 1) = (ki\,.\,p)$.
Hence every occurrence of~$p$ in~$W_n^{(k)}$ is immediately
preceded by~$ki$, giving a bijection between occurrences of~$p$
and occurrences of~$B$.
\end{proof}

\begin{theorem}\label{thm:CB-B2}
Let $k \ge 3$, $i \in \N$, and $1 \le b \le k-1$, and set
$B \defeq (ki) \oplus (0.b)$. Then
\[
  C_B^{(k)}(y)
  = C_{b+ki}^{(k)}(y)
  = y^{\,b+ki}\, \bigl(H_{k-1}(y)\bigr)^{\,i+1}.
\]
\end{theorem}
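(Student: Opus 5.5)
The plan is to chain two results already established. First, \thmref{lem:B2-to-digit}{Lemma} gives the termwise identity $c^{(k)}\bigl((ki)\oplus(0.b);\,n\bigr) = c^{(k)}(b+ki;\,n)$ for every $n \in \N$. Multiplying by $y^n$ and summing over $n$ then gives the first equality $C_B^{(k)}(y) = C_{b+ki}^{(k)}(y)$ directly.

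For the second equality I would apply \thmref{thm:Cdigit}{Theorem} with $d = b + ki$. Since $1 \le b \le k-1$, the division of $d$ by $k$ has quotient $\floor{d/k} = i$ and remainder $r = b$. The theorem therefore yields $C_{b+ki}^{(k)}(y) = y^{\,b+ki}\bigl(H_{k-1}(y)\bigr)^{i+1}$, which is the stated closed form. The hypothesis $k \ge 3$ is needed only for this step, since \thmref{thm:Cdigit}{Theorem} is stated for $k\ge 3$.

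There is no real obstacle here. The only point worth checking is that the bijection in \thmref{lem:B2-to-digit}{Lemma} also covers the small cases. The letter $p = b+ki$ is never the first letter of an image $\phi_k(t)$ because $p \not\equiv 0 \pmod k$. Moreover, the initial iterate $W_0^{(k)}=0$ contains no such $p$, so the identity holds for every $n$, including $n < p$, where both sides vanish by \thmref{lem:largest-letter}{Lemma}. The lemma's proof already handles this, so the argument is a two-line composition.

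As an optional independent check of the case $i=0$, one could solve the recurrence of \thmref{lem:c-0b}{Lemma} by generating functions. It matches \eqref{eq:Cr-base}, giving $C_{(0.b)}^{(k)}(y) = y^b H_{k-1}(y)$.
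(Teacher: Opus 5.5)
Your proposal is correct and is exactly the paper's argument: Lemma~\ref{lem:B2-to-digit} gives the termwise identity and hence $C_B^{(k)}(y)=C_{b+ki}^{(k)}(y)$, and Theorem~\ref{thm:Cdigit} with $d=b+ki$ and $\lfloor d/k\rfloor=i$ gives the closed form. Your side remarks on the small-$n$ cases and the $i=0$ cross-check via Lemma~\ref{lem:c-0b} are accurate but not needed.
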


\begin{proof}
By
{\small\hyperref[lem:B2-to-digit]{Lemma~\ref*{lem:B2-to-digit}}},
$C_B^{(k)}(y) = C_{b+ki}^{(k)}(y)$, and
{\small\hyperref[thm:Cdigit]{Theorem~\ref*{thm:Cdigit}}} with
$d = b + ki$ and $\floor{(b+ki)/k} = i$ gives the result.
\end{proof}


\begin{lemma}\label{lem:c-a0}
Let $k \ge 2$ and $a \in \N_{>0}$. For all $n \in \N$,
\[
  c^{(k)}\bigl((a.0);\, n\bigr) =
  \begin{cases}
    0,
      & \text{if } 0 \le n \le a;\\[2pt]
    2^{\,n-a-1},
      & \text{if } a < n < a+k-1;\\[2pt]
    \displaystyle\sum_{j=1}^{k-1}
      c^{(k)}\bigl((a.0);\, n-j\bigr),
      & \text{if } n \ge a+k-1.
  \end{cases}
\]
\end{lemma}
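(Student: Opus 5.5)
We will count occurrences of $(a.0)$ in $W_n^{(k)}$ using the block decomposition of Lemma~\ref{lem:W-decomp}, sorting occurrences into the three types of Definition~\ref{def:types}.

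\emph{Vanishing range.} For $0 \le n \le a$ the count is $0$. If $n < a$, the digit $a$ does not occur in $W_n^{(k)}$ at all, by Lemma~\ref{lem:largest-letter}. If $n = a$, then $a$ is the final letter of $W_a^{(k)}$ and occurs only once, so it cannot be followed by~$0$.

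\emph{General case $n > a$.} Write $s = \max\{n-k+1,0\}$ and classify occurrences of $(a.0)$.
\begin{itemize}
\item Included occurrences inside $W_i^{(k)}$ for $s \le i \le n-1$ contribute $\sum_{i=s}^{n-1} c^{(k)}((a.0);i)$.
\item Included occurrences inside the terminal block (the letter $n$, or $k\oplus W_{n-k}^{(k)}$) contribute nothing, since that block contains no digit~$0$.
\item A straddling occurrence is impossible. By Lemma~\ref{lem:straddling-factor-length2} it would have to be $(0.n)$ or $(n-k+1\,.\,k)$, and neither ends in~$0$.
\item A bordering occurrence of type $j$ sits at the junction $W_j^{(k)}\,|\,W_{j-1}^{(k)}$ with $s+1 \le j \le n-1$. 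By the Remark following Definition~\ref{def:types} (together with Lemma~\ref{lem:largest-letter}), it equals $(j.0)$. So exactly one bordering occurrence of $(a.0)$ appears, provided $s+1 \le a \le n-1$.
\end{itemize}
This gives, for $n \ge 1$,
\[
c^{(k)}((a.0);n) = \sum_{i=s}^{n-1} c^{(k)}((a.0);i) + [\,s+1 \le a \le n-1\,].
\]

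\emph{Third case, $n \ge a+k-1$.} Here $s = n-k+1 \ge a$, so $s+1 > a$ and the indicator vanishes. The sum has exactly the $k-1$ terms $j = 1,\dots,k-1$, which is the stated recurrence.

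\emph{Middle case, $a < n < a+k-1$.} Here $s \le a$, so the indicator equals $1$, and the sum may run over indices $i \le a$, where the count is zero. I expect this range to need the most care with indices, since $s$ switches between $0$ and $n-k+1$ within it. I would argue by induction on $n$.
\begin{itemize}
\item At $n = a+1$: every earlier term is $0$ and the indicator is $1$, giving $1 = 2^0$.
\item For $a+1 < n < a+k-1$: we have $n-(k-1) < a$, so the window $\{s,\dots,n-1\}$ contains all indices $a+1,\dots,n-1$. The sum is therefore $\sum_{m=a+1}^{n-1} 2^{m-a-1} = 2^{n-a-1}-1$, and adding the indicator gives $2^{n-a-1}$.
\end{itemize}

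The only delicate points are these. First, the bordering count is exactly one, because the junctions are indexed by distinct $j$. Second, the lower limit $s$ never cuts off a nonzero term in the middle range; this holds since $s \le n-k+1 < a+1$.
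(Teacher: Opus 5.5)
Your proof is correct and takes essentially the paper's route: classify occurrences via the block decomposition, observe that the only boundary contribution is the single bordering occurrence $(a.0)$ at the junction $W_a^{(k)}\,W_{a-1}^{(k)}$, and split into the three ranges of~$n$. Your indicator formula $[\,s+1 \le a \le n-1\,]$ is just a compact packaging of the paper's case analysis, and it tracks the lower limit~$s$ more carefully than the paper's sum from $i=0$.

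One small slip occurs in the middle range. The indicator needs $s \le a-1$, not merely $s \le a$, and the inequality $s \le n-k+1$ fails when $n < k-1$. The correct justification is $s = \max\{n-k+1,0\} \le a-1$, which holds because $n \le a+k-2$ and $a \ge 1$.
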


\begin{proof}
Fix $k \ge 2$ and $a \in \N_{>0}$, and set $B \defeq (a.0)$.
\begin{itemize}
  \item \emph{Case $0 \le n \le a$.}\enspace
    If $n < a$, the digit~$a$ does not occur in~$W_n^{(k)}$, so
    $c^{(k)}(B;\, n) = 0$. If $n = a$, the digit~$a$ occurs
    exactly once in~$W_a^{(k)}$ as its last letter (by
    {\small\hyperref[lem:largest-letter]{Lemma~\ref*{lem:largest-letter}}}),
    so it cannot be followed by~$0$. Thus
    $c^{(k)}(B;\, a) = 0$.

  \item \emph{Case $a < n < a+k-1$.}\enspace
    Write $n = a + t$ with $1 \le t \le k-2$ (this case is empty
    when $k = 2$). In the decomposition of
    {\small\hyperref[lem:W-decomp]{Lemma~\ref*{lem:W-decomp}}}, the
    factor $(a.0)$ can cross a boundary $W_j^{(k)}\, W_{j-1}^{(k)}$
    only when $j = a$ (the bordering factor at such a boundary
    is $(j\,.\,0)$). For $1 \le t \le k-2$, the factors
    $W_a^{(k)}$ and $W_{a-1}^{(k)}$ appear consecutively, giving
    exactly one bordering occurrence:
    \[
      c^{(k)}(B;\, a+t)
      = \sum_{i=0}^{a+t-1} c^{(k)}(B;\, i) + 1
      = 1 + \sum_{s=1}^{t-1} c^{(k)}(B;\, a+s),
    \]
    where the second equality uses $c^{(k)}(B;\, i) = 0$ for
    $i \le a$. With $c^{(k)}(B;\, a+1) = 1$, induction on~$t$
    gives $c^{(k)}(B;\, a+t) = 2^{\,t-1} = 2^{\,n-a-1}$.

  \item \emph{Case $n \ge a+k-1$.}\enspace
    Here $n \ge k$, so the decomposition of
    {\small\hyperref[lem:W-decomp]{Lemma~\ref*{lem:W-decomp}}} applies.
    Since $B$ contains the digit~$0$, it cannot occur in the
    shifted block $k \oplus W_{n-k}^{(k)}$. The internal
    bordering factors are $(j\,.\,0)$ with
    $j \in \set{n-1, \dots, n-k+2}$; since $n \ge a + k - 1$, we
    have $n - k + 2 \ge a + 1$, so none equals $(a.0)$. Hence
    every occurrence of~$B$ lies inside a single prefix factor,
    giving
    \[
      c^{(k)}(B;\, n)
      = \sum_{j=1}^{k-1} c^{(k)}(B;\, n-j).\qedhere
    \]
\end{itemize}
\end{proof}

In the next theorem we derive the generating functions for the
length-$2$ factors in~$\mathcal{B}_3^{(k)}$. The proof is based on
{\small\hyperref[lem:c-a0]{Lemma~\ref*{lem:c-a0}}} and the definition
of~$C_B^{(k)}(y)$.

\begin{theorem}\label{thm:CB-B3}
Let $k \ge 3$ and $a \in \N_{>0}$, and set
$B \defeq (a.0)$. Then
\[
  C_B^{(k)}(y) = y^a\, G_{k-1}(y).
\]
\end{theorem}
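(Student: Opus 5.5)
Starting from Lemma~\ref{lem:c-a0}, I will show that the sequence $n\mapsto c^{(k)}((a.0);\,n)$ equals $g^{(k-1)}_{n-a}$ (with $g^{(k-1)}_m\defeq 0$ for $m<0$). Then
\[
C_B^{(k)}(y)=\sum_{n\ge a} g^{(k-1)}_{n-a}y^n=y^aG_{k-1}(y).
\]
Set $m\defeq n-a$ and $u_m\defeq c^{(k)}((a.0);\,a+m)$ for $m\in\N$. We must check that $(u_m)$ and $(g^{(k-1)}_m)$ have the same initial values and satisfy the same recurrence.

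\emph{Initial values.} Recall that $g^{(k-1)}$ has initial segment $g_0=0$ and $g_m=2^{m-1}$ for $1\le m\le k-2$, and recurrence $g_m=\sum_{j=1}^{k-1}g_{m-j}$ for $m\ge k-1$.
\begin{itemize}
  \item The first case of Lemma~\ref{lem:c-a0} gives $u_0=0=g_0$. It also gives $c^{(k)}(B;\,n)=0$ for $n<a$, which matches the convention $g_m=0$ for $m<0$.
  \item The second case gives $u_m=2^{m-1}$ for $1\le m\le k-2$, matching $g_m$.
\end{itemize}

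\emph{Recurrence.} For $n\ge a+k-1$, i.e.\ $m\ge k-1$, the third case of Lemma~\ref{lem:c-a0} gives $u_m=\sum_{j=1}^{k-1}u_{m-j}$. All the indices $m-j$ here are at least $0$, so this is exactly the recurrence defining $g^{(k-1)}$ for $m\ge k-1$. Induction on $m$ then yields $u_m=g^{(k-1)}_m$ for all $m$.

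Here $k\ge 3$ is used: it guarantees that $k-1\ge 2$, so $g^{(k-1)}$ and $G_{k-1}$ are defined as in the preliminaries. When $k=3$ the middle range $1\le m\le k-2$ consists of $m=1$ only, and the identification still works.

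\emph{Alternative check of the generating function.} One can instead multiply the recurrence by $y^n$ and sum directly. By \eqref{eq:GkHk} with $k-1$ in place of $k$, this yields
\[
G_{k-1}(y)=\frac{1-y^{k-1}}{1-y-\cdots-y^{k-1}}-1=\frac{y+\cdots+y^{k-2}}{1-y-\cdots-y^{k-1}}.
\]
The sum of the initial terms $\sum_{m=1}^{k-2}2^{m-1}y^m$, corrected by the recurrence, reproduces this numerator. The only delicate point is the index bookkeeping at the boundary $m=k-1$; the direct sequence identification above avoids it.
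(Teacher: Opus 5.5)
Your proof is correct and essentially identical to the paper's: you set $u_m = c^{(k)}((a.0);\,a+m)$, take the initial values $u_0=0$, $u_m=2^{m-1}$ ($1\le m\le k-2$) and the $(k-1)$-term recurrence for $m\ge k-1$ from Lemma~\ref{lem:c-a0}, conclude $u_m=g^{(k-1)}_m$, and use the vanishing for $n\le a$ to factor out $y^a$. Your alternative generating-function check is also sound (the numerator does come out to $y+\cdots+y^{k-2}$), but the main argument does not need it.
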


\begin{proof}
Set $d_m \defeq c^{(k)}\bigl((a.0);\, a+m\bigr)$ for $m \in \N$.
By {\small\hyperref[lem:c-a0]{Lemma~\ref*{lem:c-a0}}},
$d_0 = 0$, $d_m = 2^{m-1}$ for $1 \le m \le k-2$, and
$d_m = \sum_{j=1}^{k-1} d_{m-j}$ for $m \ge k-1$.
These initial values and recurrence coincide
with those of~$(g_m^{(k-1)})_{m \in \N}$, so
$d_m = g_m^{(k-1)}$ for all $m \in \N$. Since
$c^{(k)}(B;\, n) = 0$ for $n \le a$,
\[
  C_B^{(k)}(y)
  = y^a \sum_{m \in \N} d_m\, y^m
  = y^a\, G_{k-1}(y).
\]
\end{proof}


\begin{lemma}\label{lem:c-ak}
Let $k \ge 2$ and $a \in \N_{>0}$. For all $n \in \N$,
\[
  c^{(k)}\bigl((a.k);\, n\bigr)
  = \sum_{i=\max\{n-k+1,\,0\}}^{n-1}
      c^{(k)}\bigl((a.k);\, i\bigr)
    \;+\; [a > k]\, c^{(k)}\bigl((a{-}k\,.\,0);\, n-k\bigr)
    \;+\; [n = a+k-1].
\]
\end{lemma}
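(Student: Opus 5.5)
The plan is to follow the pattern of the proofs of \thmref{lem:c-0b}{Lemma} and \thmref{lem:c-a0}{Lemma}. Set $B \defeq (a.k)$ and $s \defeq \max\set{n-k+1,\,0}$. We classify each occurrence of~$B$ in~$W_n^{(k)}$ according to \thmref{def:types}{Definition}, using the block decomposition of \thmref{lem:W-decomp}{Lemma}. Since $\abs{B} = 2$, every occurrence is either contained in one block, or crosses an internal boundary (bordering), or crosses the terminal boundary (straddling). The three terms on the right-hand side should correspond exactly to these three possibilities, with the terminal-block part of the included occurrences giving the middle term. For $n = 0$ both sides vanish: $W_0^{(k)} = 0$ has no length-$2$ factor, the sum is empty, the term $c^{(k)}\bigl((a{-}k\,.\,0);\, n-k\bigr)$ has a negative index and is read as zero, and $n = a+k-1$ is impossible because $a \ge 1$.

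\emph{Case $1 \le n < k$.} Here $W_n^{(k)} = W_{n-1}^{(k)} \cdots W_0^{(k)}\, n$.
\begin{itemize}
  \item Occurrences inside some $W_i^{(k)}$ with $0 \le i \le n-1$ give the sum. The final block is the single letter~$n$, which cannot contain a length-$2$ word.
  \item A bordering occurrence has the form $(j.0)$ by the Remark after \thmref{def:types}{Definition}. This differs from $(a.k)$ because $k \neq 0$.
  \item The straddling factor is $(0.n)$ by \thmref{lem:straddling-factor-length2}{Lemma}. This differs from $(a.k)$ because $a > 0$.
\end{itemize}
The remaining two terms vanish: $n - k < 0$, and $n = a+k-1$ would force $n \ge k$.

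\emph{Case $n \ge k$.} The decomposition is $W_{n-1}^{(k)} \cdots W_{n-k+1}^{(k)}\,\bigl(k \oplus W_{n-k}^{(k)}\bigr)$.
\begin{itemize}
  \item Occurrences inside the prefix blocks give $\sum_{i=n-k+1}^{n-1} c^{(k)}(B;\, i)$.
  \item Bordering occurrences are again of the form $(j.0)$, so none equals~$B$.
  \item By \thmref{lem:straddling-factor-length2}{Lemma} there is exactly one straddling occurrence, namely of $(n-k+1\,.\,k)$. It equals~$B$ iff $n = a+k-1$, which gives the Iverson term $[n = a+k-1]$.
  \item Occurrences inside the terminal block $k \oplus W_{n-k}^{(k)}$ are in bijection with occurrences of $(a-k\,.\,0)$ in~$W_{n-k}^{(k)}$, because shifting preserves occurrence counts. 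For $a < k$ there are none, since every digit of the shifted block is at least~$k$.
\end{itemize}

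The only delicate point, which I expect to be the main obstacle, is the boundary case $a = k$ in the terminal block. There the preimage would be the factor $(0.0)$, which is not covered by the bracket $[a > k]$. I would handle it with \thmref{lem:Fac2-subset-Bk}{Lemma}: every length-$2$ factor of~$W_{n-k}^{(k)}$ lies in~$\mathcal{B}^{(k)}$, and $(0.0)$ is not in~$\mathcal{B}^{(k)}$. Indeed, $\mathcal{B}_3^{(k)}$ requires a positive first digit, $\mathcal{B}_2^{(k)}$ requires a second digit not divisible by~$k$, and $\mathcal{B}_1^{(k)}$ requires a second digit at least~$k$. Hence $(0.0)$ never occurs, and the terminal contribution is exactly $[a > k]\, c^{(k)}\bigl((a{-}k\,.\,0);\, n-k\bigr)$. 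Adding the four contributions yields the stated recurrence.
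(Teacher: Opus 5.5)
Your proposal is correct and follows the paper's proof. You use the same split into prefix-block, bordering, straddling and terminal-block occurrences: bordering occurrences are excluded because they have the form $(j.0)$, the unique straddling factor $(n-k+1\,.\,k)$ gives $[n=a+k-1]$, and the shifted block gives $[a>k]\,c^{(k)}\bigl((a{-}k\,.\,0);\,n-k\bigr)$. Your separate handling of $n<k$ and your use of Lemma~\ref{lem:Fac2-subset-Bk} to show that $(0.0)$ never occurs just spell out what the paper asserts in one line.
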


\begin{proof}
By {\small\hyperref[lem:W-decomp]{Lemma~\ref*{lem:W-decomp}}},
the factor $(a.k)$ cannot be bordering
(such factors have the form~$(j\,.\,0)$ by the Remark
following
{\small\hyperref[def:types]{Definition~\ref*{def:types}}}\,\ref{item:rem-bordering}),
so included occurrences contribute the sum. The straddling factor
is $(n-k+1\,.\,k)$ by
{\small\hyperref[lem:straddling-factor-length2]{Lemma~\ref*{lem:straddling-factor-length2}}},
equalling~$(a.k)$ precisely when $n = a + k - 1$. For the terminal
block, the shift preimage $(a{-}k\,.\,0)$ contributes only when
$a > k$, since $(0\,.\,0)$ never occurs in any~$W_t^{(k)}$.
\end{proof}

\begin{theorem}\label{thm:CB-B1}
Let $k \ge 3$, $a \in \N_{>0}$, and $i \in \N$, and set
$B \defeq (ki) \oplus (a.k)$. Then
\[
  C_B^{(k)}(y)
  = y^{a+ki}\, \bigl(H_{k-1}(y)\bigr)^{i+1}\,
    \Bigl(y^{k-1} + [a > k]\; G_{k-1}(y)\Bigr).
\]
\end{theorem}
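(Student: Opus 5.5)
Our plan is to prove the case $i = 0$ first and then obtain general $i$ by iterating the shift identity of \thmref{thm:CBshift}{Theorem}\,\ref{item:shift-ogf}.

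\emph{Step 1: the base case $i=0$.} Set $B_0 \defeq (a.k)$. We multiply the recurrence of \thmref{lem:c-ak}{Lemma} by $y^n$ and sum over $n \in \N$. The included sum runs over $i$ from $\max\{n-k+1,0\}$ to $n-1$, so it is a sum over the lags $j=1,\dots,k-1$, with out-of-range terms equal to zero. It therefore contributes $(y+\cdots+y^{k-1})\,C_{B_0}^{(k)}(y)$. The indicator $[n=a+k-1]$ contributes $y^{a+k-1}$. The middle term contributes $[a>k]\,y^k\,C_{(a-k.0)}^{(k)}(y)$, and by \thmref{thm:CB-B3}{Theorem} this equals $[a>k]\,y^k\,y^{a-k}G_{k-1}(y) = [a>k]\,y^a G_{k-1}(y)$. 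Solving with \eqref{eq:Hk} for $H_{k-1}$ gives
\[
  C_{B_0}^{(k)}(y) = y^{a}\,H_{k-1}(y)\,\bigl(y^{k-1} + [a>k]\,G_{k-1}(y)\bigr),
\]
which is the claim for $i=0$.

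\emph{Step 2: induction on $i$.} Write $B_i \defeq (ki)\oplus(a.k)$ for $i \ge 1$. We want to apply \eqref{eq:CBshift} with $B = B_{i-1}$, which requires that every occurrence of $B_i$ in $W_n^{(k)}$, for $n \ge k$, is included.
\begin{itemize}
  \item $B_i$ cannot be bordering: its second letter is $k(i+1) \neq 0$, whereas bordering length-$2$ factors have the form $(j.0)$ by the Remark after \thmref{def:types}{Definition}.
  \item $B_i$ cannot be straddling: by \thmref{lem:straddling-factor-length2}{Lemma}, the straddling factor is $(n-k+1\,.\,k)$ when $n \ge k$, whose second letter is $k$, while $B_i$ has second letter $k(i+1) > k$. (For $n<k$ the statement concerns $n \ge k$ only, and in any case $(0.n)$ has second letter $n<k$.)
\end{itemize}
Hence \eqref{eq:CBshift} gives $C_{B_i}^{(k)} = y^k H_{k-1}(y)\, C_{B_{i-1}}^{(k)}$. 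Iterating and using Step 1 yields the factor $y^{a+ki}\,H_{k-1}(y)^{i+1}$ times $\bigl(y^{k-1} + [a>k]\,G_{k-1}(y)\bigr)$, as stated.

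\emph{Main obstacle.} The delicate point is the bookkeeping in Step 1. We must check that the recurrence of \thmref{lem:c-ak}{Lemma} holds uniformly for all $n$, including small $n<k$ where the shifted term is absent. We must also make sure the indicator $[a>k]$ is handled correctly at the boundary case $a=k$: there the preimage would be $(0.0)$, which never occurs, so that term contributes nothing, consistent with the indicator. Once these are verified, the algebra reduces to the identity $1-y-\cdots-y^{k-1} = 1/H_{k-1}(y)$.
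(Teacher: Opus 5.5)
Your proposal is correct and takes the same route as the paper's proof. You get the base case $i=0$ by summing the recurrence of Lemma~\ref{lem:c-ak} against $y^n$ and inserting Theorem~\ref{thm:CB-B3} for the $(a{-}k\,.\,0)$ term, and then iterate Theorem~\ref{thm:CBshift}, part (b), which applies because the second letter $k(i+1)$ of $B_i$ exceeds $k$, so $B_i$ can be neither bordering nor straddling.
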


\begin{proof}
For $t \in \N$ set
$B_t \defeq (kt) \oplus (a.k)$, so that $B_0 = (a.k)$ and
$B_i = B$.

\smallskip
\noindent\emph{Step~1: the case $t = 0$.}\enspace
Multiplying the recurrence of
{\small\hyperref[lem:c-ak]{Lemma~\ref*{lem:c-ak}}} by~$y^n$ and summing
over $n \in \N$ gives
\[
  C_{(a.k)}^{(k)}(y)
  = (y + y^2 + \cdots + y^{k-1})\, C_{(a.k)}^{(k)}(y)
    + [a > k]\, y^k\, C_{(a{-}k\,.\,0)}^{(k)}(y)
    + y^{a+k-1}.
\]
Substituting
$C_{(a{-}k\,.\,0)}^{(k)}(y) = y^{a-k}\, G_{k-1}(y)$
({\small\hyperref[thm:CB-B3]{Theorem~\ref*{thm:CB-B3}}})
when $a > k$, and solving via $H_{k-1}(y)$ yields
\begin{equation}\label{eq:CB-ak}
  C_{(a.k)}^{(k)}(y)
  = y^a\, H_{k-1}(y)\,
    \Bigl(y^{k-1} + [a > k]\, G_{k-1}(y)\Bigr).
\end{equation}

\smallskip
\noindent\emph{Step~2: shifting by~$k$.}\enspace
For each $t \in \N$, the second letter of
$B_{t+1} = (k(t{+}1)) \oplus (a.k)$ exceeds~$k$, so $B_{t+1}$
is neither bordering nor straddling. By
{\small\hyperref[thm:CBshift]{Theorem~\ref*{thm:CBshift}}},
$C_{B_{t+1}}^{(k)}(y) = y^k\, H_{k-1}(y)\, C_{B_t}^{(k)}(y)$.
Iterating $i$~times and substituting~\eqref{eq:CB-ak} gives the
result.
\end{proof}


\begin{proposition}\label{prop:c-ab}
Let $k \ge 2$ and $a, b \in \N$ with $a \ge k$, $b > k$, and
$(a.b) \sqsubseteq \mathbf{W}^{(k)}$. For all $n \in \N$,
\begin{equation}\label{eq:c-ab}
  c^{(k)}\bigl((a.b);\, n\bigr)
  = \sum_{i=\max\{n-k+1,\,0\}}^{n-1}
      c^{(k)}\bigl((a.b);\, i\bigr)
    \;+\; c^{(k)}\bigl((a{-}k\,.\,b{-}k);\, n-k\bigr).
\end{equation}
\end{proposition}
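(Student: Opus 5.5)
We follow the pattern of the proofs of Lemma~\ref{lem:c-0b} and Lemma~\ref{lem:c-ak}: decompose $W_n^{(k)}$ via Lemma~\ref{lem:W-decomp} and sort the occurrences of $B \defeq (a.b)$ by the three types of Definition~\ref{def:types}. We first dispose of small $n$. Since $b > k \ge 2$ and $b$ is the largest digit of $B$, Lemma~\ref{lem:largest-letter} gives $c^{(k)}(B;\, n) = 0$ for $n < b$, and in particular for all $n \le k$. For such $n$ the right-hand side of~\eqref{eq:c-ab} also vanishes. The sum is over earlier iterates, all of which have count zero. The shifted term is zero because $n - k \notin \N$ when $n < k$, while for $n = k$ it is $c^{(k)}((a{-}k\,.\,b{-}k);\,0)$, and $W_0^{(k)} = 0$ has no length-$2$ factor. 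So we may assume $n > k$, where the second branch of Lemma~\ref{lem:W-decomp} applies.

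We now rule out non-included occurrences. By the Remark following Definition~\ref{def:types}, a bordering occurrence of a length-$2$ word has the form $(j.0)$. This is impossible here because the second letter $b$ is positive. By Lemma~\ref{lem:straddling-factor-length2}, the unique straddling length-$2$ factor of $W_n^{(k)}$ is $(n-k+1\,.\,k)$, whose second letter is exactly $k$. Since $b > k$, this cannot equal $B$. Hence every occurrence of $B$ in $W_n^{(k)}$ is included. By Lemma~\ref{lem:largest-letter}, an occurrence cannot consist of the single final letter $n$ because it has length $2$. The occurrence therefore lies in exactly one of the blocks $W_{n-1}^{(k)}, \dots, W_{n-k+1}^{(k)}$ or in $k \oplus W_{n-k}^{(k)}$. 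The blocks are disjoint, so no occurrence is counted twice.

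Finally we count by block. The prefix blocks contribute $\sum_{i=n-k+1}^{n-1} c^{(k)}(B;\, i)$, which is the sum in~\eqref{eq:c-ab} since $\max\set{n-k+1,0} = n-k+1$. For the terminal block, shifting by $k$ is a bijection on positions. Because $a \ge k$ and $b > k$, the word $(a{-}k\,.\,b{-}k)$ has nonnegative digits, and
\[
\abs{k \oplus W_{n-k}^{(k)}}_{B} = \abs{W_{n-k}^{(k)}}_{(a-k\,.\,b-k)} = c^{(k)}\bigl((a{-}k\,.\,b{-}k);\, n-k\bigr).
\]
Adding the two contributions gives~\eqref{eq:c-ab}.

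The main point to watch is the straddling exclusion. This is where the hypothesis $b > k$ is needed rather than $b \ge k$: when $b = k$ the extra term $[n = a+k-1]$ of Lemma~\ref{lem:c-ak} appears. The hypothesis $a \ge k$ is what makes the shifted preimage a genuine word over $\N$. The hypothesis that $(a.b)$ is a factor of $\mathbf{W}^{(k)}$ is not used in the counting itself; it only ensures the statement concerns a nontrivial sequence.
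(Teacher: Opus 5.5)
Your proof is correct and follows the paper's argument: you rule out bordering occurrences (second letter $0$) and straddling occurrences (second letter $k$), dispose of small $n$, and count blockwise using the shift bijection on the terminal block $k \oplus W_{n-k}^{(k)}$. One small slip: $b$ need not be the largest digit of $B$ (e.g.\ $(9.8)$ for $k=4$), but your vanishing argument for $n<b$ only uses that $b$ is a digit of $B$ exceeding $n$, so nothing breaks.
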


\begin{proof}
Since $a \ge k \ge 2$ and $b > k$, the factor $(a.b)$ is
neither bordering (these have the form~$(j\,.\,0)$) nor
straddling (these have the form $(0.n)$ or
$(n{-}k{+}1\,.\,k)$), so every occurrence is included. For
$n < k$, every digit of~$W_n^{(k)}$ is less than $k \le a$,
so both sides of~\eqref{eq:c-ab} vanish. For $n \ge k$,
counting factorwise in
{\small\hyperref[lem:W-decomp]{Lemma~\ref*{lem:W-decomp}}} and
using
$\abs{k \oplus W_{n-k}^{(k)}}_{(a.b)}
= \abs{W_{n-k}^{(k)}}_{(a{-}k\,.\,b{-}k)}
= c^{(k)}\bigl((a{-}k\,.\,b{-}k);\, n-k\bigr)$
gives~\eqref{eq:c-ab}.
\end{proof}

\addtocontents{toc}{\protect\setcounter{tocdepth}{1}}
\section{Concluding Remarks}\label{sec:conclusion}

In this paper, for each $k \ge 3$ we studied factor-occurrence
statistics for the infinite-alphabet $k$-Bonacci
word~$\mathbf{W}^{(k)}$ through its finite
iterates~$W_n^{(k)}$. We obtained explicit generating functions
for digit occurrences, determined the complete set of length-$2$
factors of~$\mathbf{W}^{(k)}$, and computed closed forms for
$C_B^{(k)}(y)$ for every
$B \in \mathrm{Fac}_2\!\bigl(\mathbf{W}^{(k)}\bigr)$. A common
feature of all these formulas is that they are controlled by the
$(k{-}1)$-Bonacci denominator $1 - y - \cdots - y^{k-1}$, while
the shift operator $B \mapsto k \oplus B$ provides a uniform
mechanism for transporting recurrences to larger digits and
factors.

As a concrete illustration, standard singularity
analysis~\cite{FS} extracts the following asymptotic behaviour
directly from
{\small\hyperref[thm:Cdigit]{Theorem~\ref*{thm:Cdigit}}}.

\begin{corollary}\label{cor:asymptotic}
Let $k \ge 3$ and $d \in \N$. Write $s = \floor{d/k} + 1$ and
let $\alpha_{k-1}$ denote the dominant root of
$x^{k-1} - x^{k-2} - \cdots - x - 1$. Then
\[
  c^{(k)}(d;\, n)
  \;\sim\; \gamma_{d,k}\; n^{s-1}\; \alpha_{k-1}^{\,n}
  \qquad \text{as } n \to \infty,
\]
where $\gamma_{d,k} > 0$ is an explicitly computable constant
depending on~$d$ and~$k$.
\end{corollary}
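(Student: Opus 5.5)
We start from the closed form of \thmref{thm:Cdigit}{Theorem}, $C_d^{(k)}(y) = y^d\,(H_{k-1}(y))^{s}$ with $s=\floor{d/k}+1$, and $H_{k-1}(y)=1/Q(y)$, where $Q(y)\defeq 1-y-\cdots-y^{k-1}$. The coefficient $c^{(k)}(d;\,n)$ is then read off from the dominant singularity of $C_d^{(k)}$ by the standard transfer theorem for rational functions with a unique dominant pole~\cite{FS}.

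\textbf{Step 1: the dominant root.} Put $\rho\defeq 1/\alpha_{k-1}$. Since $x^{k-1}Q(1/x)=x^{k-1}-x^{k-2}-\cdots-1$, the point $\rho$ is a root of $Q$. Because $k-1\ge 2$, the classical facts about the $(k-1)$-Bonacci polynomial apply:
\begin{itemize}
\item $\alpha_{k-1}\in(1,2)$ is a simple root;
\item every other root of $x^{k-1}-x^{k-2}-\cdots-1$ has modulus strictly less than $1$.
\end{itemize}
To make this self-contained, we argue directly on $Q$. For $|y|=\rho$ with $y\neq\rho$ we have $\bigl|\sum_{j=1}^{k-1} y^j\bigr| < \sum_{j=1}^{k-1}\rho^j = 1$. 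The inequality is strict because the terms $y$ and $y^2$ cannot all share one argument unless $y>0$. Hence $\rho$ is the unique zero of $Q$ on $|y|\le\rho$. Simplicity follows from $Q'(\rho) = -\sum_{j=1}^{k-1} j\rho^{j-1} < 0$.

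\textbf{Step 2: local expansion and transfer.} Near $y=\rho$ write $Q(y) = (1-y/\rho)\,R(y)$ with $R(\rho) = -\rho\, Q'(\rho) = \sum_{j} j\rho^{j} > 0$. Then
\[
C_d^{(k)}(y) = \frac{y^d\, R(y)^{-s}}{(1-y/\rho)^{s}},
\]
where $y^d R(y)^{-s}$ is analytic at $\rho$ with positive value $\rho^d R(\rho)^{-s}$ there. Since $C_d^{(k)}$ is rational, its other poles lie strictly outside the disk $|y|\le\rho$ by Step~1. A partial fraction decomposition, or the transfer theorem, therefore gives
\[
[y^n]\,C_d^{(k)}(y) = \frac{\rho^{d} R(\rho)^{-s}}{(s-1)!}\, n^{s-1}\rho^{-n}\bigl(1+O(1/n)\bigr) + O(r^{-n})
\]
for some $r>\rho$. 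This uses $[y^n](1-y/\rho)^{-s} = \binom{n+s-1}{s-1}\rho^{-n}$. Here the $O(1/n)$ term collects the lower-order terms of the pole at $\rho$, and the $O(r^{-n})$ term collects the contributions of the other poles.

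\textbf{Step 3: the constant.} Set
\[
\gamma_{d,k} \defeq \frac{\alpha_{k-1}^{-d}}{(s-1)!\,\bigl(\sum_{j=1}^{k-1} j\alpha_{k-1}^{-j}\bigr)^{s}} > 0 .
\]
This is explicit, positive, and depends only on $d$ and $k$, which gives the claimed asymptotic.

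The main obstacle is Step~1, namely the strict dominance of $\rho$ and its simplicity. The rest is routine. If the strict-inequality argument needs care, the fallback is to cite the known result that the $k$-Bonacci polynomial has a single root outside the unit circle.
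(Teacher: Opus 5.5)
Your proposal is correct and follows the same route as the paper. You use Theorem~\ref{thm:Cdigit} to write $C_d^{(k)}(y)=y^d\,(H_{k-1}(y))^{s}$, identify $\rho=1/\alpha_{k-1}$ as a simple, strictly dominant pole of $H_{k-1}$, and apply the transfer theorem for a pole of order~$s$. You also supply details the paper leaves implicit, namely the strict-dominance and simplicity argument for $\rho$ and the explicit constant $\gamma_{d,k}=\frac{\alpha_{k-1}^{-d}}{(s-1)!\,\bigl(\sum_{j=1}^{k-1} j\,\alpha_{k-1}^{-j}\bigr)^{s}}$; these check out.
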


\begin{proof}
By {\small\hyperref[thm:Cdigit]{Theorem~\ref*{thm:Cdigit}}},
$C_d^{(k)}(y) = y^d\, (H_{k-1}(y))^{s}$ where
$H_{k-1}(y) = (1 - y - \cdots - y^{k-1})^{-1}$. The dominant
singularity of~$H_{k-1}$ is a simple pole at
$\rho = 1/\alpha_{k-1}$; hence $C_d^{(k)}(y)$ has a pole of
order~$s$ at~$\rho$, and the standard transfer
theorem~\cite[Theorem~VI.4]{FS} yields the stated asymptotics
with polynomial growth factor $n^{s-1}$.
\end{proof}

In particular, the digit~$d$ occurs in~$W_n^{(k)}$ with
exponential growth rate~$\alpha_{k-1}$ (independent of~$d$),
while the polynomial prefactor $n^{\floor{d/k}}$ increases with
the ``generation index'' $\floor{d/k}$. Analogous asymptotics
hold for all length-$2$ factors by the same method.

These results suggest that the same phenomenon should persist for
arbitrary fixed factors.

\begin{conjecture}\label{conj:main}
For every fixed $k \ge 3$ and every non-empty factor
$B \sqsubseteq \mathbf{W}^{(k)}$, there exist an integer
$s(B) \ge 1$ and a polynomial $P_B(y) \in \Z[y]$ such that
\[
  C_B^{(k)}(y)
  = \frac{P_B(y)}%
         {(1 - y - y^2 - \cdots - y^{k-1})^{s(B)}}.
\]
Equivalently, the sequence
$\bigl(c^{(k)}(B;\, n)\bigr)_{n \in \N}$ satisfies a
linear recurrence whose characteristic polynomial divides a power
of
\[
  x^{k-1} - x^{k-2} - \cdots - x - 1.
\]
\end{conjecture}

The formulas proved in
{\small\hyperref[thm:Cdigit]{Theorems~\ref*{thm:Cdigit}}},
{\small\hyperref[thm:CB-B2]{\ref*{thm:CB-B2}}},
{\small\hyperref[thm:CB-B3]{\ref*{thm:CB-B3}}}, and
{\small\hyperref[thm:CB-B1]{\ref*{thm:CB-B1}}}, together with the
analogous asymptotics they yield, are all consistent with
this conjectural picture. We have also verified
{\small\hyperref[conj:main]{Conjecture~\ref*{conj:main}}}
computationally for all factors of length $\le 5$ and
$k \le 6$.%
\footnote{The computation enumerates all factors of
$W_n^{(k)}$ up to a sufficiently large iterate, extracts the
sequence $\bigl(c^{(k)}(B;\, n)\bigr)$, and checks that the
minimal polynomial of its generating function divides a power of
$1 - y - \cdots - y^{k-1}$.}

The most immediate goal is to extend the classification and
enumeration from length~$2$ to length $m \ge 3$.

\begin{problem}\label{prob:longer-factors}
Fix $k \ge 3$ and an integer $m \ge 3$.
\begin{enumerate}[label=\textup{(\roman*)}]
  \item Describe (in an explicit and usable form) the set of
    length-$m$ factors occurring
    in~$\mathbf{W}^{(k)}$, i.e.\ determine
    $\mathrm{Fac}_m\!\bigl(\mathbf{W}^{(k)}\bigr)$.
  \item For each
    $B \in \mathrm{Fac}_m\!\bigl(\mathbf{W}^{(k)}\bigr)$,
    determine a closed form for $C_B^{(k)}(y)$,
    or, at minimum, derive a uniform linear recurrence satisfied
    by the sequence
    $\bigl(c^{(k)}(B;\, n)\bigr)_{n \in \N}$ whose order and
    coefficients are independent of~$n$.
\end{enumerate}
\end{problem}

A first concrete step toward
{\small\hyperref[conj:main]{Conjecture~\ref*{conj:main}}} and
{\small\hyperref[prob:longer-factors]{Problem~\ref*{prob:longer-factors}}}
is the
classification of the factors of length~$3$ and the
determination of the corresponding generating functions. This is
the next level at which included, bordering, and straddling
occurrences interact in a genuinely richer way. Moreover,
{\small\hyperref[prop:c-ab]{Proposition~\ref*{prop:c-ab}}} suggests that
once boundary effects are separated, a large class of factors
should continue to satisfy shift-recursions analogous to those
obtained in \secref{section:4} and \secref{section:5}.

Another natural direction is to refine the projection
$\mathbf{W}^{(k)} \xrightarrow{\;\pi_k\;}
\mathbf{F}^{(k)}$ by introducing a finite analogue of factor
complexity that counts lifts of factors of the classical
finite-alphabet $k$-Bonacci word inside the
iterates~$W_n^{(k)}$. It would also be interesting to complement
the existing structural results on palindromes, square factors
and critical factors, Lyndon factors, and kernel words
in~$\mathbf{W}^{(k)}$ by deriving explicit occurrence formulas
for these families in the finite iterates; see, for
instance,~\cite{M,P,R,Q}. The recurrence-based framework
developed here, combining the block decomposition with the shift
operator and generating function methods, offers a systematic
approach to these and related problems.

\subsection*{Acknowledgments}
The second author is grateful to Michel Rigo for valuable
comments and suggestions on an earlier version of this paper.

\makeatletter
\let\orig@thebibliography\thebibliography
\renewcommand{\thebibliography}[1]{%
    \orig@thebibliography{#1}%
    \setlength{\leftmargin}{5.5em}%
    \setlength{\itemindent}{-2em}%
    \setlength{\labelwidth}{2.5em}%
    \setlength{\labelsep}{0.5em}%
    \setlength{\rightmargin}{3em}%
    \footnotesize
    \renewcommand{\makelabel}[1]{\hfill\textnormal{\scriptsize ##1}}%
}
\makeatother

\hypersetup{linkcolor=backrefcolor}
\addtocontents{toc}{\protect\setcounter{tocdepth}{-1}}

\addtocontents{toc}{\protect\setcounter{tocdepth}{1}}

\hypersetup{linkcolor=cyan!80!black}
\normalsize

\makeatletter
\@setaddresses
\renewcommand{\@setaddresses}{}
\makeatother

\newpage
\appendix
\newcommand{\gc}{\cellcolor{gray!10}}%
\addtocontents{toc}{\protect\setcounter{tocdepth}{-1}}
\section{Numerical Tables for \texorpdfstring{$k=4$}{k=4}}%
\label{app:k4-tables}
\addtocontents{toc}{\protect\setcounter{tocdepth}{1}}
\addcontentsline{toc}{section}{\texorpdfstring%
  {\bfseries\color{toccolor}Appendix A}{Appendix A}}
\addtocontents{toc}{\protect\setcounter{tocdepth}{-1}}

We illustrate the results of \secref{section:4} and
\secref{section:5} with the case $k = 4$ (Tetrabonacci). The
three tables below record occurrence counts
$c^{(4)}(B;\, n) = \abs{W_n^{(4)}}_B$ for selected digits and
length-$2$ factors, computed from the first several iterates
of~$\phi_4$. In each table, the family structure predicted by
the closed-form generating functions is clearly visible:
columns within a family are successive shifts of one another, and
different families exhibit distinct polynomial-exponential growth
patterns.

\subsection{Digit-occurrence counts}

\tabref{tab:digits-k4} lists the digit-occurrence counts
$c^{(4)}(d;\, n) = \abs{W_n^{(4)}}_d$. By
{\small\hyperref[thm:Cdigit]{Theorem~\ref*{thm:Cdigit}}}, the columns
group into families according to $d = 4m + r$ with
$0 \le r < 4$: for $m = 0$ one obtains shifts of the
Tribonacci numbers
({\small\color{teal}\underline{\href{https://oeis.org/A000073}{\textcolor{teal}{A000073}}}}),
while $m = 1$ and $m = 2$ yield shifts of
the two-fold and three-fold convolution powers of $H_3(y)$
({\small\color{teal}\underline{\href{https://oeis.org/A073778}{\textcolor{teal}{A073778}}}}).

\begin{table}[ht]
\centering\small
\renewcommand{\arraystretch}{1.2}
\setlength{\tabcolsep}{4.5pt}
\begin{tabular}{r rrrr @{\hspace{6pt}} rrrr @{\hspace{6pt}} rrrr}
\toprule
 & \multicolumn{4}{c}{$m = 0$}
 & \multicolumn{4}{c}{$m = 1$}
 & \multicolumn{4}{c}{$m = 2$} \\
\cmidrule(lr){2-5} \cmidrule(lr){6-9} \cmidrule(lr){10-13}
$n$ & $0$ & $1$ & $2$ & $3$ & $4$ & $5$ & $6$ & $7$ & $8$ & $9$ & $a$ & $b$ \\
\midrule
\gc 0  &\gc 1    &\gc 0    &\gc 0    &\gc 0    &\gc 0    &\gc 0    &\gc 0    &\gc 0    &\gc 0    &\gc 0    &\gc 0   &\gc 0 \\
    1  & 1    & 1    & 0    & 0    & 0    & 0    & 0    & 0    & 0    & 0    & 0   & 0 \\
\gc 2  &\gc 2    &\gc 1    &\gc 1    &\gc 0    &\gc 0    &\gc 0    &\gc 0    &\gc 0    &\gc 0    &\gc 0    &\gc 0   &\gc 0 \\
    3  & 4    & 2    & 1    & 1    & 0    & 0    & 0    & 0    & 0    & 0    & 0   & 0 \\
\gc 4  &\gc 7    &\gc 4    &\gc 2    &\gc 1    &\gc 1    &\gc 0    &\gc 0    &\gc 0    &\gc 0    &\gc 0    &\gc 0   &\gc 0 \\
    5  & 13   & 7    & 4    & 2    & 2    & 1    & 0    & 0    & 0    & 0    & 0   & 0 \\
\gc 6  &\gc 24   &\gc 13   &\gc 7    &\gc 4    &\gc 5    &\gc 2    &\gc 1    &\gc 0    &\gc 0    &\gc 0    &\gc 0   &\gc 0 \\
    7  & 44   & 24   & 13   & 7    & 12   & 5    & 2    & 1    & 0    & 0    & 0   & 0 \\
\gc 8  &\gc 81   &\gc 44   &\gc 24   &\gc 13   &\gc 26   &\gc 12   &\gc 5    &\gc 2    &\gc 1    &\gc 0    &\gc 0   &\gc 0 \\
    9  & 149  & 81   & 44   & 24   & 56   & 26   & 12   & 5    & 3    & 1    & 0   & 0 \\
\gc 10 &\gc 274  &\gc 149  &\gc 81   &\gc 44   &\gc 118  &\gc 56   &\gc 26   &\gc 12   &\gc 9    &\gc 3    &\gc 1   &\gc 0 \\
    11 & 504  & 274  & 149  & 81   & 244  & 118  & 56   & 26   & 25   & 9    & 3   & 1 \\
\gc 12 &\gc 927  &\gc 504  &\gc 274  &\gc 149  &\gc 499  &\gc 244  &\gc 118  &\gc 56   &\gc 63   &\gc 25   &\gc 9   &\gc 3 \\
    13 & 1705 & 927  & 504  & 274  & 1010 & 499  & 244  & 118  & 153  & 63   & 25  & 9 \\
\gc 14 &\gc 3136 &\gc 1705 &\gc 927  &\gc 504  &\gc 2027 &\gc 1010 &\gc 499  &\gc 244  &\gc 359  &\gc 153  &\gc 63  &\gc 25 \\
    15 & 5768 & 3136 & 1705 & 927  & 4040 & 2027 & 1010 & 499  & 819  & 359  & 153 & 63 \\
\bottomrule
\end{tabular}
\caption{The counts $c^{(4)}(d;\, n) = \abs{W_n^{(4)}}_d$ for
$0 \le n \le 15$ (rows) and $d = 0, 1, \ldots, 11$ (columns).
Here $a = 10$ and $b = 11$.}
\label{tab:digits-k4}
\end{table}

\newpage
\subsection{Length-\texorpdfstring{$2$}{2} factors in
\texorpdfstring{$\mathcal{B}_1^{(4)}$}{B1(4)}
(sample counts)}

\tabref{tab:B1-k4} lists the counts
$c^{(4)}(B;\, n) = \abs{W_n^{(4)}}_B$ for factors
$B = (4i) \oplus (a.4) \in \mathcal{B}_1^{(4)}$ with
$1 \le a \le 4$ and $i \in \set{0, 1, 2}$. By
{\small\hyperref[thm:CB-B1]{Theorem~\ref*{thm:CB-B1}}} (with
$a \le k$), the columns group into families indexed by~$i$: the
family $i = 0$ consists of shifts of the Tribonacci numbers,
while $i = 1$ and $i = 2$ yield shifts of the two-fold and
three-fold convolution powers of $H_3(y)$.

\begin{table}[ht]
\centering\small
\renewcommand{\arraystretch}{1.2}
\setlength{\tabcolsep}{4.5pt}
\begin{tabular}{r rrrr @{\hspace{6pt}} rrrr @{\hspace{6pt}} rrrr}
\toprule
 & \multicolumn{4}{c}{$i = 0$}
 & \multicolumn{4}{c}{$i = 1$}
 & \multicolumn{4}{c}{$i = 2$} \\
\cmidrule(lr){2-5} \cmidrule(lr){6-9} \cmidrule(lr){10-13}
$n$ & $14$ & $24$ & $34$ & $44$ & $58$ & $68$ & $78$ & $88$ & $9c$ & $ac$ & $bc$ & $cc$ \\
\midrule
\gc 4  &\gc 1    &\gc 0    &\gc 0    &\gc 0    &\gc 0    &\gc 0    &\gc 0    &\gc 0    &\gc 0   &\gc 0   &\gc 0   &\gc 0 \\
    5  & 1    & 1    & 0    & 0    & 0    & 0    & 0    & 0    & 0   & 0   & 0   & 0 \\
\gc 6  &\gc 2    &\gc 1    &\gc 1    &\gc 0    &\gc 0    &\gc 0    &\gc 0    &\gc 0    &\gc 0   &\gc 0   &\gc 0   &\gc 0 \\
    7  & 4    & 2    & 1    & 1    & 0    & 0    & 0    & 0    & 0   & 0   & 0   & 0 \\
\gc 8  &\gc 7    &\gc 4    &\gc 2    &\gc 1    &\gc 1    &\gc 0    &\gc 0    &\gc 0    &\gc 0   &\gc 0   &\gc 0   &\gc 0 \\
    9  & 13   & 7    & 4    & 2    & 2    & 1    & 0    & 0    & 0   & 0   & 0   & 0 \\
\gc 10 &\gc 24   &\gc 13   &\gc 7    &\gc 4    &\gc 5    &\gc 2    &\gc 1    &\gc 0    &\gc 0   &\gc 0   &\gc 0   &\gc 0 \\
    11 & 44   & 24   & 13   & 7    & 12   & 5    & 2    & 1    & 0   & 0   & 0   & 0 \\
\gc 12 &\gc 81   &\gc 44   &\gc 24   &\gc 13   &\gc 26   &\gc 12   &\gc 5    &\gc 2    &\gc 1   &\gc 0   &\gc 0   &\gc 0 \\
    13 & 149  & 81   & 44   & 24   & 56   & 26   & 12   & 5    & 3   & 1   & 0   & 0 \\
\gc 14 &\gc 274  &\gc 149  &\gc 81   &\gc 44   &\gc 118  &\gc 56   &\gc 26   &\gc 12   &\gc 9   &\gc 3   &\gc 1   &\gc 0 \\
    15 & 504  & 274  & 149  & 81   & 244  & 118  & 56   & 26   & 25  & 9   & 3   & 1 \\
\gc 16 &\gc 927  &\gc 504  &\gc 274  &\gc 149  &\gc 499  &\gc 244  &\gc 118  &\gc 56   &\gc 63  &\gc 25  &\gc 9   &\gc 3 \\
    17 & 1705 & 927  & 504  & 274  & 1010 & 499  & 244  & 118  & 153 & 63  & 25  & 9 \\
\gc 18 &\gc 3136 &\gc 1705 &\gc 927  &\gc 504  &\gc 2027 &\gc 1010 &\gc 499  &\gc 244  &\gc 359 &\gc 153 &\gc 63  &\gc 25 \\
    19 & 5768 & 3136 & 1705 & 927  & 4040 & 2027 & 1010 & 499  & 819 & 359 & 153 & 63 \\
\bottomrule
\end{tabular}
\caption{The counts
$c^{(4)}(B;\, n) = \abs{W_n^{(4)}}_B$ for $4 \le n \le 19$
(rows) and
$B \in \set{14, 24, 34, 44, 58, 68, 78, 88, 9c, ac, bc, cc}$
(columns). Here $a = 10$, $b = 11$, and $c = 12$.}
\label{tab:B1-k4}
\end{table}

\newpage
\subsection{Mixed selection of length-\texorpdfstring{$2$}{2}
factors (sample counts)}

\tabref{tab:len2-mixed-k4} lists the counts
$c^{(4)}(B;\, n) = \abs{W_n^{(4)}}_B$ for a mixed selection of
length-$2$ factors of~$\mathbf{W}^{(4)}$. The first five columns
correspond to factors
$B = (a.0) \in \mathcal{B}_3^{(4)}$ and illustrate
{\small\hyperref[thm:CB-B3]{Theorem~\ref*{thm:CB-B3}}}. The remaining
columns correspond to factors
$B = (4i) \oplus (a.4) \in \mathcal{B}_1^{(4)}$ with $a > 4$ and
$i \in \set{0, 1}$, governed by
{\small\hyperref[thm:CB-B1]{Theorem~\ref*{thm:CB-B1}}}; in particular,
the additional contribution for $a > k$ is visible in the growth
of these columns.

\begin{table}[ht]
\centering\small
\renewcommand{\arraystretch}{1.2}
\setlength{\tabcolsep}{3.5pt}
\begin{tabular}{r rrrrr @{\hspace{6pt}} rrrrr @{\hspace{6pt}} rrrrr}
\toprule
 & \multicolumn{5}{c}{$\mathcal{B}_3^{(4)}$}
 & \multicolumn{5}{c}{$\mathcal{B}_1^{(4)},\; i = 0$}
 & \multicolumn{5}{c}{$\mathcal{B}_1^{(4)},\; i = 1$} \\
\cmidrule(lr){2-6} \cmidrule(lr){7-11} \cmidrule(lr){12-16}
$n$ & $10$ & $20$ & $30$ & $40$ & $50$ & $54$ & $64$ & $74$ & $84$ & $94$ & $98$ & $a8$ & $b8$ & $c8$ & $d8$ \\
\midrule
\gc 1  &\gc 0    &\gc 0    &\gc 0    &\gc 0    &\gc 0    &\gc 0    &\gc 0    &\gc 0    &\gc 0    &\gc 0    &\gc 0   &\gc 0   &\gc 0   &\gc 0   &\gc 0 \\
    2  & 1    & 0    & 0    & 0    & 0    & 0    & 0    & 0    & 0    & 0    & 0   & 0   & 0   & 0   & 0 \\
\gc 3  &\gc 2    &\gc 1    &\gc 0    &\gc 0    &\gc 0    &\gc 0    &\gc 0    &\gc 0    &\gc 0    &\gc 0    &\gc 0   &\gc 0   &\gc 0   &\gc 0   &\gc 0 \\
    4  & 3    & 2    & 1    & 0    & 0    & 0    & 0    & 0    & 0    & 0    & 0   & 0   & 0   & 0   & 0 \\
\gc 5  &\gc 6    &\gc 3    &\gc 2    &\gc 1    &\gc 0    &\gc 0    &\gc 0    &\gc 0    &\gc 0    &\gc 0    &\gc 0   &\gc 0   &\gc 0   &\gc 0   &\gc 0 \\
    6  & 11   & 6    & 3    & 2    & 1    & 1    & 0    & 0    & 0    & 0    & 0   & 0   & 0   & 0   & 0 \\
\gc 7  &\gc 20   &\gc 11   &\gc 6    &\gc 3    &\gc 2    &\gc 3    &\gc 1    &\gc 0    &\gc 0    &\gc 0    &\gc 0   &\gc 0   &\gc 0   &\gc 0   &\gc 0 \\
    8  & 37   & 20   & 11   & 6    & 3    & 8    & 3    & 1    & 0    & 0    & 0   & 0   & 0   & 0   & 0 \\
\gc 9  &\gc 68   &\gc 37   &\gc 20   &\gc 11   &\gc 6    &\gc 18   &\gc 8    &\gc 3    &\gc 1    &\gc 0    &\gc 0   &\gc 0   &\gc 0   &\gc 0   &\gc 0 \\
    10 & 125  & 68   & 37   & 20   & 11   & 40   & 18   & 8    & 3    & 1    & 1   & 0   & 0   & 0   & 0 \\
\gc 11 &\gc 230  &\gc 125  &\gc 68   &\gc 37   &\gc 20   &\gc 86   &\gc 40   &\gc 18   &\gc 8    &\gc 3    &\gc 4   &\gc 1   &\gc 0   &\gc 0   &\gc 0 \\
    12 & 423  & 230  & 125  & 68   & 37   & 181  & 86   & 40   & 18   & 8    & 13  & 4   & 1   & 0   & 0 \\
\gc 13 &\gc 778  &\gc 423  &\gc 230  &\gc 125  &\gc 68   &\gc 375  &\gc 181  &\gc 86   &\gc 40   &\gc 18   &\gc 36  &\gc 13  &\gc 4   &\gc 1   &\gc 0 \\
    14 & 1431 & 778  & 423  & 230  & 125  & 767  & 375  & 181  & 86   & 40   & 93  & 38  & 13  & 4   & 1 \\
\gc 15 &\gc 2632 &\gc 1431 &\gc 778  &\gc 423  &\gc 230  &\gc 1553 &\gc 767  &\gc 375  &\gc 181  &\gc 86   &\gc 228 &\gc 93  &\gc 38  &\gc 13  &\gc 4 \\
    16 & 4841 & 2632 & 1431 & 778  & 423  & 3118 & 1553 & 767  & 375  & 181  & 538 & 228 & 93  & 38  & 13 \\
\bottomrule
\end{tabular}
\caption{The counts
$c^{(4)}(B;\, n) = \abs{W_n^{(4)}}_B$ for $1 \le n \le 16$
(rows) and
$B \in \set{10, 20, 30, 40, 50, 54, 64, 74, 84, 94, 98, a8, b8,
c8, d8}$ (columns). Here $a = 10$, $b = 11$, $c = 12$, and
$d = 13$.}
\label{tab:len2-mixed-k4}
\end{table}


\begin{thebibliography}{BDMSS}

\bibitem[AS]{T}
H.~Ammar and T.~Sellami,
\emph{Kernel words and factorization of the $k$-bonacci sequence},
Indian J.\ Pure Appl.\ Math.\ \textbf{54} (2023), no.~3,
816--823.
{\textsc{doi}}: \href{https://doi.org/10.1007/s13226-022-00300-2}%
     {10.1007/s13226-022-00300-2}

\bibitem[AR]{H}
P.~Arnoux and G.~Rauzy,
\emph{Repr\'esentation g\'eom\'etrique de suites de complexit\'e
$2n+1$},
Bull.\ Soc.\ Math.\ France \textbf{119} (1991), no.~2,
199--215.
{\textsc{doi}}: \href{https://doi.org/10.24033/bsmf.2164}%
     {10.24033/bsmf.2164}

\bibitem[Ber]{B}
J.~Berstel,
\emph{Fibonacci words---a survey},
in G.~Rozenberg and A.~Salomaa (eds.),
The Book of~L, Lecture Notes in Comput.\ Sci., vol.~198,
Springer, (1986), pp.~13--27.\newline
{\textsc{doi}}: \href{https://doi.org/10.1007/978-3-642-95486-3_2}%
     {10.1007/978-3-642-95486-3\_2}

\bibitem[BR]{D}
V.~Berth\'e and M.~Rigo (eds.),
\emph{Combinatorics, Automata and Number Theory},
Encyclopedia of Mathematics and its Applications, vol.~135,
Cambridge University Press, Cambridge, (2010).\newline
{\textsc{doi}}: \href{https://doi.org/10.1017/CBO9780511777653}%
     {10.1017/CBO9780511777653}

\bibitem[BDMSS]{O}
M.~Boja\'nczyk, C.~David, A.~Muscholl, T.~Schwentick,
and L.~Segoufin,
\emph{Two-variable logic on data words},
ACM Trans.\ Comput.\ Logic \textbf{12} (2011), no.~4,
27:1--27:26.\newline
{\textsc{doi}}: \href{https://doi.org/10.1145/1970398.1970403}%
     {10.1145/1970398.1970403}

\bibitem[DJP]{I}
X.~Droubay, J.~Justin, and G.~Pirillo,
\emph{Episturmian words and some constructions of de~Luca and
Rauzy},
Theoret.\ Comput.\ Sci.\ \textbf{255} (2001), no.~1--2,
539--553.\newline
{\textsc{doi}}: \href{https://doi.org/10.1016/S0304-3975(99)00306-8}%
     {10.1016/S0304-3975(99)00306-8}

\bibitem[Fer]{F}
S.~Ferenczi,
\emph{The tribonacci word and its generalizations},
Preprint, (1998).
Available at
\href{https://www-igm.univ-mlv.fr/~berstel/Colloque-Pitagore/Ferenczi.pdf}%
     {www-igm.univ-mlv.fr/\textasciitilde berstel/\allowbreak
      Colloque-Pitagore/\allowbreak Ferenczi.pdf}

\bibitem[FS]{FS}
P.~Flajolet and R.~Sedgewick,
\emph{Analytic Combinatorics},
Cambridge University Press, Cambridge, (2009).\newline
{\textsc{doi}}: \href{https://doi.org/10.1017/CBO9780511801655}%
     {10.1017/CBO9780511801655}

\bibitem[GMS]{M}
N.~Ghareghani, M.~Mohammad-Noori, and P.~Sharifani,
\emph{Some properties of the $k$-bonacci words on the infinite
alphabet},
Electron.\ J.\ Combin.\ \textbf{27} (2020), no.~3,
Paper~3.59, 27~pp.\newline
{\textsc{doi}}: \href{https://doi.org/10.37236/9406}%
     {10.37236/9406}

\bibitem[GS]{P}
N.~Ghareghani and P.~Sharifani,
\emph{On square factors and critical factors of $k$-bonacci words
on infinite alphabet},
Theoret.\ Comput.\ Sci.\ \textbf{865} (2021), 34--43.
{\textsc{doi}}: \href{https://doi.org/10.1016/j.tcs.2021.02.027}%
     {10.1016/j.tcs.2021.02.027}

\bibitem[GJ]{J}
A.~Glen and J.~Justin,
\emph{Episturmian words: a survey},
RAIRO---Theor.\ Inform.\ Appl.\ \textbf{43} (2009), no.~3,
403--442.
{\textsc{doi}}: \href{https://doi.org/10.1051/ita/2009015}%
     {10.1051/ita/2009015}

\bibitem[GSS]{R}
A.~Glen, J.~Simpson, and W.\,F.~Smyth,
\emph{More properties of the Fibonacci word on an infinite
alphabet},
Theoret.\ Comput.\ Sci.\ \textbf{795} (2019), 301--311.
{\textsc{doi}}: \href{https://doi.org/10.1016/j.tcs.2019.07.011}%
     {10.1016/j.tcs.2019.07.011}

\bibitem[HW]{S}
Y.~Huang and Z.~Wen,
\emph{Kernel words and gap sequence of the tribonacci sequence},
Acta Math.\ Sci.\ Ser.~B (Engl.\ Ed.) \textbf{36} (2016),
no.~1, 173--194.
{\textsc{doi}}: \href{https://doi.org/10.1016/S0252-9602(15)30086-2}%
     {10.1016/S0252-9602(15)30086-2}

\bibitem[LW]{U}
Y.~Li and W.~Wu,
\emph{$N$-factor complexity of the Fibonacci sequence on~$\N$
and the factor-counting sequences},
Theoret.\ Comput.\ Sci.\ \textbf{1072} (2026), 115880.
{\textsc{doi}}: \href{https://doi.org/10.1016/j.tcs.2026.115880}%
     {10.1016/j.tcs.2026.115880}

\bibitem[Lot]{C}
M.~Lothaire,
\emph{Algebraic Combinatorics on Words},
Encyclopedia of Mathematics and its Applications, vol.~90,
Cambridge University Press, Cambridge, (2002).
{\textsc{doi}}: \href{https://doi.org/10.1017/CBO9781107326019}%
     {10.1017/CBO9781107326019}

\bibitem[Mil]{1}
E.\,P.~Miles, Jr.,
\emph{Generalized Fibonacci numbers and associated matrices},
Amer.\ Math.\ Monthly \textbf{67} (1960), no.~8,
745--752.
{\textsc{doi}}: \href{https://doi.org/10.1080/00029890.1960.11989593}%
     {10.1080/00029890.1960.11989593}

\bibitem[MH]{A}
M.~Morse and G.\,A.~Hedlund,
\emph{Symbolic dynamics~II: Sturmian trajectories},
Amer.\ J.\ Math.\ \textbf{62} (1940), no.~1,
1--42.
{\textsc{doi}}: \href{https://doi.org/10.2307/2371431}%
     {10.2307/2371431}

\bibitem[Rau]{E}
G.~Rauzy,
\emph{Nombres alg\'ebriques et substitutions},
Bull.\ Soc.\ Math.\ France \textbf{110} (1982),
147--178.\newline
{\textsc{doi}}: \href{https://doi.org/10.24033/bsmf.1957}%
     {10.24033/bsmf.1957}

\bibitem[Seg]{N}
L.~Segoufin,
\emph{Automata and logics for words and trees over an infinite
alphabet},
in Z.~\'Esik (ed.),
Computer Science Logic, Lecture Notes in Comput.\ Sci.,
vol.~4207, Springer, (2006), pp.~41--57.\newline
{\textsc{doi}}: \href{https://doi.org/10.1007/11874683_3}%
     {10.1007/11874683\_3}

\bibitem[Sir]{G}
V.\,F.~Sirvent,
\emph{A semigroup associated with the $k$-bonacci numbers with
dynamic interpretation},
Fibonacci Quart.\ \textbf{35} (1997), no.~4,
335--340.
{\textsc{doi}}: \href{https://doi.org/10.1080/00150517.1997.12428977}%
     {10.1080/00150517.1997.12428977}

\bibitem[Zha]{Q}
J.~Zhang,
\emph{Kernel words and gap sequences of the tribonacci word on an
infinite alphabet},
Mathematics \textbf{11} (2023), no.~20, 4356.
{\textsc{doi}}: \href{https://doi.org/10.3390/math11204356}%
     {10.3390/math11204356}

\bibitem[ZWW]{L}
J.~Zhang, Z.~Wen, and W.~Wu,
\emph{Some properties of the Fibonacci sequence on an infinite
alphabet},
Electron.\ J.\ Combin.\ \textbf{24} (2017), no.~2,
Paper~2.52, 22~pp.
{\textsc{doi}}: \href{https://doi.org/10.37236/6745}%
     {10.37236/6745}

\end{thebibliography}
\end{document}